\documentclass[hidelinks,onefignum,onetabnum]{siamart220329}

\usepackage{lipsum}
\usepackage{physics}
\usepackage{siunitx}
\usepackage{amsfonts}
\usepackage{graphicx}
\usepackage{epstopdf}
\usepackage{esint}
\usepackage{braket}

\usepackage[algo2e,linesnumbered,ruled,vlined]{algorithm2e}
\SetKwInput{KwData}{Input}
\SetKwInput{KwResult}{Output}

\usepackage{todonotes}

\ifpdf
\DeclareGraphicsExtensions{.eps,.pdf,.png,.jpg}
\else
\DeclareGraphicsExtensions{.eps}
\fi

\newsiamremark{remark}{Remark}
\newsiamremark{hypothesis}{Hypothesis}
\crefname{hypothesis}{Hypothesis}{Hypotheses}
\newsiamthm{claim}{Claim}

\headers{Dynamic off-the-grid untangling of curves by Riemannian
metric}{T. Bertrand and B. Laville}

\title{Dynamic off-the-grid untangling of curves with Reeds-Shepp
  metric.\thanks{Submitted to the editors \today.
    \funding{The work of Bastien Laville has been supported by the
      French government, through the UCA DS4H Investments in the Future
      project managed by the National Research Agency (ANR) with the
      reference number
      \href{https://anr.fr/ProjetIA-17-EURE-0004}{ANR-17-EURE-0004}.
      The work of Théo Bertrand has been supported by the French
      government, through the 3IA PRAIRIE Institute, 'Investments in
      the Future' project managed by the National Research Agency (ANR)
      with the reference number
    \href{https://anr.fr/ProjetIA-19-P3IA-0001}{ANR-19-P3IA-0001}.}
}}

\author{Théo Bertrand\thanks{CEREMADE, UMR CNRS 7534, University
    Paris Dauphine, PSL Research University, 75016 Paris, France
  (\email{tbertrand@ceremade.dauphine.fr}). Authors are ordered alphabetically.}
  \and Bastien Laville\thanks{Morpheme team, Inria, CNRS, Université
    Côte d'Azur, Nice, France
    (\email{bastien.laville@pm.me},
    \url{https://www-sop.inria.fr/members/Bastien.Laville/}).
  Corresponding author.}
}

\usepackage{amsopn}

\ifpdf
\hypersetup{
  pdftitle={Dynamic off-the-grid untangling of curves with Reeds-Shepp metric},
  pdfauthor={Bastien Laville, Théo Bertrand},
  pdfsubject={Calculus of variations},
  pdfkeywords={Curve untangling}
}
\fi

\usepackage{notations}
\usepackage{physics, bm, mathrsfs, braket, amssymb, amsmath, amsbsy,
mathtools, enumitem}
\usepackage{bbm}

\usepackage{caption, subcaption}

\begin{document}

\maketitle

\begin{abstract}
  We propose an improved strategy for point sources tracking in a
  temporal stack through an off-the-grid fashion, inspired by the
  Benamou-Brenier regularisation in the literature. We define a
  lifting of the problem in the higher-dimensional space of the
  roto-translation group. This allows us to overcome the theoretical
  limitation of the off-the-grid method towards tangled point source
  trajectories, thus enabling the reconstruction and untangling even
  from the numerical standpoint. We define accordingly a new
  regularisation based on the relaxed Reeds-Shepp metric, an
  approximation of the sub-Riemannian Reeds-Shepp metric, further
  allowing control on the local curvature of the recovered trajectories.
  Then, we derive some properties of the discretisation and prove a
  $\Gamma$-convergence result, fostering interest for practical
  applications of polygonal, Bézier, and piecewise geodesic discretisation.
  We finally test our proposed method on a localisation problem
  example, and give a fair comparison with the state-of-the-art
  off-the-grid method.

\end{abstract}

\begin{keywords}
  Calculus of variations, differential geometry, dynamic off-the-grid
  method, trajectories untangling, optimal transport, ultrasound
  localisation microscopy.
\end{keywords}

\begin{MSCcodes}
  46E27, 49N45, 53C80, 53C21, 58E30, 34K29.
\end{MSCcodes}

\section{Introduction and contributions}

\label{sec:intro}
This work focuses on the definition of an off-the-grid framework,
especially with an energy able to track \emph{crossing} point sources
from a temporal stack of measurements. In the context of inverse
problem, this amounts to the recovery of crossing Dirac measures
trajectories, also called dynamic Dirac measures. Such interest is
motivated by the latest refinements in the off-the-grid community
\cite{Bredies2019, Bredies2022, Tovey2021}, where several results,
such as an energy for \emph{moving} point trajectories recovery and
numerical implementation were enabled. It caters for both
\emph{balanced} (constant amplitude over time) and \emph{unbalanced}
(fluctuating amplitude) cases.

However, the literature in calculus of variations nowadays does not
offer a genuine tractable off-the-grid framework for tangled paths:
for instance, if two spikes are crossing, the current
state-of-the-art cannot tackle these reconstructions and, on the
contrary, yield two separate paths, which do not touch in any way.
Yet such situations arise naturally in several domains such as
biomedical imaging, where the physical objects used for
super-resolution such as air bubbles can cross. An example of curves
encountered in a natural and practical setting is presented in Figure
\ref{fig:blood-vessels}.

\begin{figure}[ht!]
  \centering
  \includegraphics[width=0.7\linewidth]{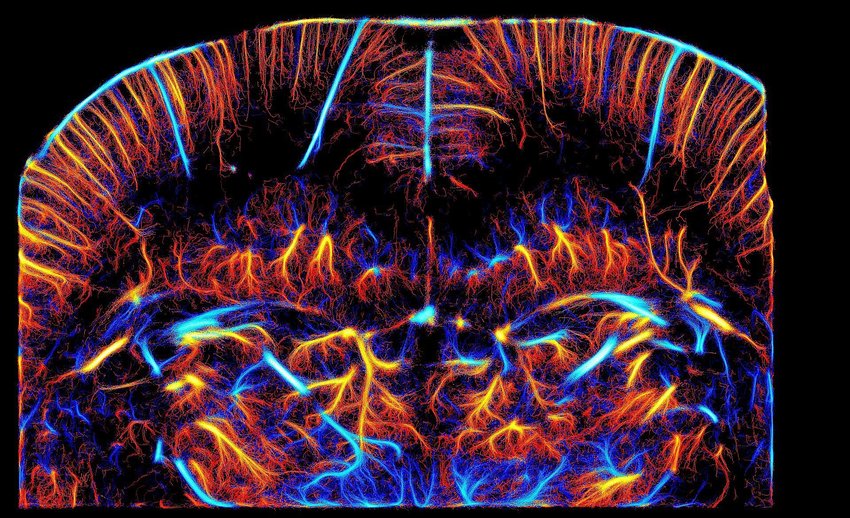}
  \caption{Crossing curves arise genuinely in several biological
    structures, such as cell nucleus, blood vessels, \emph{etc}. This
    image is a reconstruction of blood vessels in a rat brain,
    performed by ULM (Ultrasound Localisation Microscopy) as joint work
    from ESPCI, Inserm and CNRS \cite{Errico2015}. Color relates to the
    velocimetry of the microbubbles, red indicates   high positive
  velocity while blue corresponds to high negative velocities.}
  \label{fig:blood-vessels}
\end{figure}

In the following, we propose a method to recover the point sources
paths, even when a crossing occurs. Our main idea boils down to a
lifting on the roto-translation space  \cite{Chambolle2019,
duits2018optimal}, see Figure \ref{fig:roto-illustration}, while
formulating a new energy with a Riemannian regularisation tailored
for paths untangling. Additionally, we propose a general theoretical
framework for the numerical aspects,  which proves convergence of the
discretised problem towards the continuous problem, with a reduced
set of hypothesis.
To the best knowledge of the authors, this is the first attempt to
recover tangled paths as dynamic Dirac measures, in an off-the-grid manner.

We will consider an inverse problem model similar to the one proposed
in \cite{Schmitzer2019, Bredies2019a} for which \cite{Tovey2021}
added numerous numerical recipes. The main idea to solve this inverse
problem in an off-the-grid fashion boils down to the reconstruction
of a Radon measure supported on a space of curves. The reconstruction
is achieved by minimising an energy functional with a data term that
penalises distance between data and the candidate minimiser, and a
regularisation term that is linear w.r.t. the candidate. Such
formulation dictactes \emph{by design} the shape of the minimisers:
indeed, this problem was derived while having in mind the opportunity
to use a \emph{representer theorem} as the ones proven in
\cite{Boyer2018, Bredies2019b}. It provides a strong result as it
ensures that the shape of the minimiser is determined by the extremal
points of the unit ball of the regulariser; hence being able to
characterise sharply the solution, as an \emph{a priori}, before even
solving the inverse problem.
The \emph{representer theorem} requires some few hypotheses, among
which are the finite-dimensionality of the space on which the
distance to data is computed. In \cite{Bredies2019b}, the authors
give a proof that the extremal points of the unit ball for the
considered Benamou-Brenier regularisation are Dirac measures on the
space of curves, thus ensuring the sparsity and the shape of the solution.
One may note that if \cite{Bredies2019b} and \cite{Bredies2019}
express the problem in terms of a time-dependent density $\rho(x,t)$
(represented as a measure-valued continuous function of time) and a
velocity field $v_t$, coupled via the continuity equation, the
solution realises the optimal transport between two marginal
distributions, hence could be described by a path $\gamma$. Even if
the correspondence is not one-to-one, one can associate a measure
path $\delta_\gamma$ to a density $\rho(x,t)$ and velocity field
$v_t$ couple, and conversely, at least under suitable smoothness conditions.

On the numerical side, a few works have been trying to propose
numerical methods to recover curves from a temporal series of
measurements, or stack of images. For instance, \cite{Bredies2022}
proposes a \emph{conditional gradient method} or \emph{Frank-Wolfe
algorithm} \cite{Frank1956} to solve the considered inverse problem.
\emph{Frank-Wolfe} type algorithms are typical greedy algorithms
deployed on a convex optimisation problem on a compact set, it
consists of iteratively minimising the linearised version of the
objective function on a compact set. Classic theory tells us that
such minimisers will be located on extremal points of this compact
set. The latest developments \cite{Tovey2021} expand on the
Frank-Wolfe algorithm setup and leverage a dynamical programming
approach to speed up the computation. They also extend the numerics
to the 'unbalanced' case, where each curve has an associated
time-dependent amplitude, accounting for mass creation and destruction.

\begin{figure}[ht!]
  \centering
  \begin{subfigure}[b]{0.46\textwidth}
    \centering
    \includegraphics[width=\textwidth]{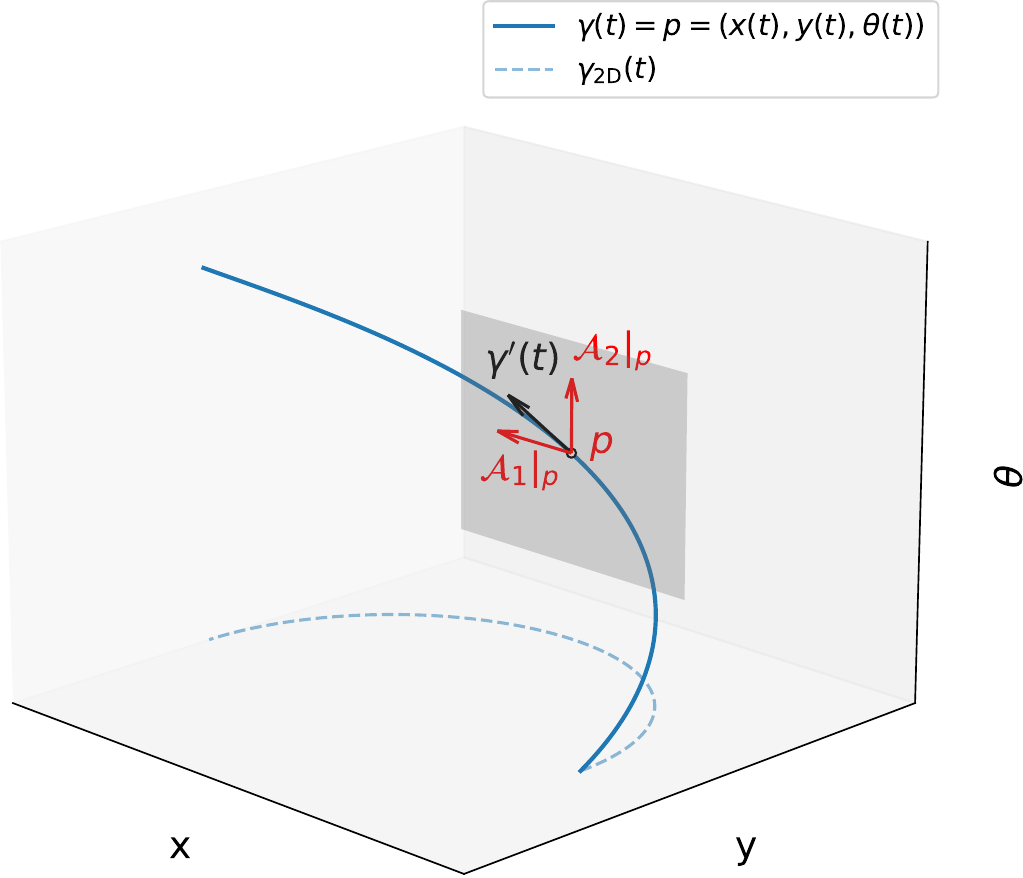}
  \end{subfigure}
  \hspace{0.05\textwidth}
  \begin{subfigure}[b]{0.41\textwidth}
    \centering
    \includegraphics[width=\textwidth]{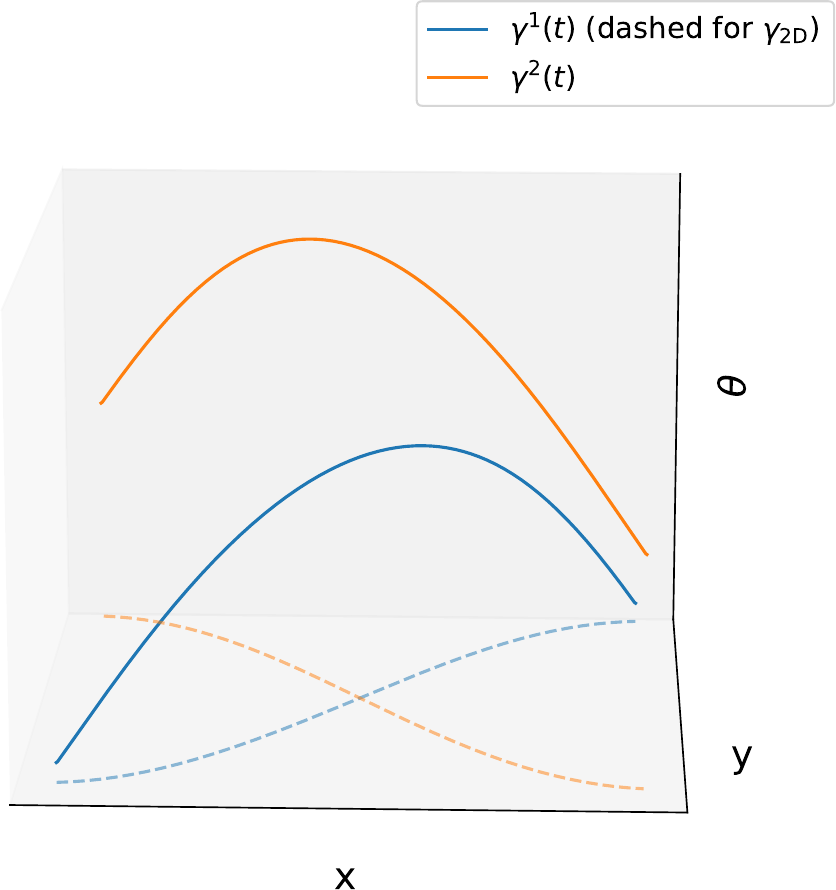}
  \end{subfigure}
  \caption{(Left) Each point on the planar curve $\gamma_\text{2D}(t)
    = (x(t), y(t))$ is lifted to a point $p = \gamma(t) = (x(t), y(t),
    \theta(t)) \in \rototrans_2$ along a horizontal curve (solid line),
    where $\theta(t)$ denotes the direction of the tangent vector
    $\dot{\gamma}_{2D}(t)$ of the original planar curve. The
    differential structure enables a tangent plane at $p$ spanned by
    $\mathcal A_1|_p$ and $\mathcal  A_2|_p$, where the tangent vector
    $\gamma'(t)$ lies. Adapted from \cite{bekkers_2015}. (Right) The
  two dashed planar curves crossing in 2D are untangled in the lifting space.}
  \label{fig:roto-illustration}
\end{figure}

\subsection{Contributions}
Our main contributions in this article are listed below.
\begin{itemize}
  \item Introduce a lift of the curve recovery problem in the
    roto-translation space, also understood as the position-orientation space;
  \item Define a new regularisation on this space thanks to a
    Riemannian metric tailored for dynamic Dirac measure recovery, in
    particular for crossing trajectories;
  \item Prove the $\Gamma\text{-convergence}$ of the discretised
    curve problem to the global one, thus showing the interest of
    discretising the space of curves using polygonal, Bézier and
    piecewise geodesic curves as approximations in the practical context;
  \item Propose an efficient greedy algorithm, with several numerical
    examples to illustrate the capabilities of our proposed method.
    We study balanced and unbalanced cases, with polygonal, Bézier
    and piecewise geodesic discretisations.

\end{itemize}

\subsection{Paper outline}
This paper is organised as follows: the first section introduces the
subject with as little mathematics and jargon as possible and gives
an idea of what has been done and what elements we bring to the
literature, the second section introduces the variational problem,
then the third section gives theoretical insights as to why the
proposed discretisations work. Finally, the last section shows our
attempts to bring our method to concrete applications, with convincing results.

\subsection{Notations}
Let $d \in \N^*$ be the dimension of the ambient space, namely the
space where the sources positions live. In the following, $\xx$ is a
closed, bounded and convex subset of $\R^d$. $\hilb$ is a Hilbert
space, endowed with its canonical norm $\norm{\cdot}_\hilb$.
$\sphere^d$ is the $d$-sphere, understood as the space of
orientation. The roto-translation space will be further denoted
$\rototrans^d \eqdef \R^d \rtimes \sphere^d$, for $\rtimes$ the
semi-direct product pertaining to the group structure and $\SO(d)$
the $d$-space of rotation. We also note $\wedge$ the cross-product,
and $\cdot$ the dot product.
Let $(X_1,S_1)$ and $(X_2,S_2)$ be two measurable spaces, $m$ be a
measure on $S_1$ and $f: X_1 \to X_2$ a measurable function. The push
forward of $\mu$ is defined as the measure $f_\sharp \mu$ defined for
all $B \in S_2$ by $f_{\sharp}\mu(B)\eqdef \mu(f^{-1}(B))$.
The tensor notation, and especially the Einstein notation will be
used: consider $(x^1, \dots, x^N)$ some contravariant tensor, and a
covariant tensor $(c_1, \dots, c_N)$ then the sum $y = \sum_{i=1}^N
c_i x^i$ is abbreviated $y = c_i x^i$ where we omit the summation symbol.

\section{Dynamic off-the-grid 101: a quick explanation on the setting}

This work belongs to the calculus of variations field, with proposed
applications to inverse problem: an \emph{inverse problem} aims to
recover some physical quantities, from a low-passed noisy
observation. This is typically the localisation of sources, from a
blurred, downgraded image or measurement.
The \emph{off-the-grid} variational methods, also called
\emph{gridless} methods, are a rather recent addition to the
literature \cite{Bredies2012, Castro2012, Duval2014, Denoyelle2016},
designed to overcome the limitations of the discrete methods, more
precisely by the fine grid. Indeed, in the discrete framework such as
the Basis Pursuit/LASSO, a fine grid is introduced and the sources
are estimated within this setting: the point (also called
\emph{spikes}) are thereby constrained on this grid, further yielding
discretisation discrepancies. On the contrary, off-the-grid methods
do not rely on such grid and rather consist in the optimisation of a
measure in terms of amplitude, position and number of the point
sources, it can then fully leverage the physical knowledge of the
structures we aim to retrieve. Moreover, they offer several
theoretical results, such as a quantitative bound characterising the
discrepancies between the source and the reconstruction. In the
following, we propose a quick recall of these notions; we advise the
interested reader to take a glance at the seminal papers cited
before, or at \cite{Laville2021} to get a broader vision of the
domain and its applications.

\subsection{Off-the-grid recovery of point sources in a static manner}

The machinery behind the off-the-grid framework really boils down to
the notion of \emph{measure}. Several measure spaces and
regularisations were proposed in the literature \cite{Bredies2012,
Castro2021, Laville2023, Bredies2019} for distinct geometry of the
sources such as points, sets, curves, \emph{etc}. In the following,
we will investigate for the sake of pedagogy the classic case study
in the gridless methods: the recovery of point sources from one
observation, without any notion of time. As defined in the notations
section, let $\xx$ be a closed, bounded and convex set of $\R^d$.

\subsubsection{Radon measure space}

A spike, or a point source, can \emph{physically} take any position
on the \emph{continuum} $\xx$; it cannot be constrained to a finite
set of positions -- at least in the physical world. It can be
accurately modelled by a Dirac measure/masse $a \delta_x$: loosely
speaking, this map allows us to encode both amplitude $a \in \R$ and
spatial continuous information $x \in \xx$ in the same object.
However, since the Dirac measure is not a classic continuous
function, one needs to consider a more general set of mapping called
the Radon measures.

From a distributional standpoint, it is a subset of the distribution
space $\distr$; the latter being the space of linear forms over the
space of test functions $\cistr$ \ie\ smooth functions (continuous
derivatives of all orders) compactly supported. In fact, it is the
smallest Banach space that contains the Dirac masses.
This functional approach is based on the definition of a measure as a
linear form on a function space, hence we will need:

\begin{definition}[Continuous function on $\xx$]
  We call $\czerh$ the set of continuous functions from $\xx$ to a
  normed vector space $\yy$, endowed with the \emph{supremum} norm
  $\normsupremum{\cdot}$ of functions.
\end{definition}

\begin{definition}[Evanescent continuous function on $\xx$]
  We call $\cevah$ the set of evanescent continuous functions from
  $\xx$ to a normed vector space $\yy$, namely all the continuous map
  $\psi : \xx \to \yy$ such that :

  \begin{align*}
    \forall \varepsilon >0, \exists K \subset \xx \,
    \mathrm{compact}, \quad \sup_{x \in \xx \setminus K } \norm{\psi
    (x)}_\yy \leq \varepsilon.
  \end{align*}
\end{definition}

We  write $\ceva$ when $\yy = \R$. Then,

\begin{definition}[Set of Radon measures]\label{def:radon}
  We denote by $\radon$ the set of real signed Radon measures on
  $\xx$ of finite masses. It is the topological dual of $\ceva$ with
  \emph{supremum} norm $\normsupremum{\cdot}$ by the Riesz-Markov
  representation theorem \cite{Federer1996}.
  Thus, a Radon measure $m \in \radon$ is a continuous linear form on
  functions $f \in \ceva$, with the duality bracket denoted by
  $\crochet{f}{m} = \int_\xx f \ud m$.
\end{definition}

A \emph{signed} measure means that the quantity $\crochet{f}{m}$ is
allowed to be negative, further generalising the notion of
probability, hence positive, measure.
Classic examples of Radon measures are the Lebesgue measure, the
Dirac measure $\delta_z$ centred in $z \in \xx$ namely for all $f \in
\ceva$ one has $\crochet{f}{\delta_z} = f(z)$, \etc.

The Banach space $\radon$ can be endowed with the topology of the
norm, or with the topology of its dual called the \wast\ topology.

\begin{definition}[\Wast\ convergence]
  A sequence of measures $(m)_{n\in \N}$ \wastly\ converges to $m \in
  \radon$, denoted by $m_n \cvet m$, if:
  \begin{align*}
    \forall g \in \cistr, \quad \int_\xx g \ud m_n \xrightarrow[n \to
    +\infty]{} \int_\xx g \ud m.
  \end{align*}
\end{definition}

\subsubsection{Observation model}

Let us introduce $\hilb$ the Hilbert space where the acquired data,
or \emph{observation},  live. In the case of images, we use a
finite-dimensional space of acquisition $\hilb = \hilb_n \eqdef \R^n$
for $n \in \N$. Let $m \in \radon$ be a source measure, we call
\emph{acquisition}, or \emph{observation}, $y \in \hilb$ the result
of the \emph{forward/acquisition map} $\Phi : \radon \to \hilb$
evaluated on $m$, with measurement kernel $\varphi: \xx \to \hilb$
continuous and bounded \cite{Denoyelle2018}:

\begin{align}
  y \eqdef \Phi m = \int_\xx \varphi(x) \ud m(x).
  \label{eq:Phi}
\end{align}

Also note that the forward operator $\Phi$ incorporates a sampling
operation, hence $\hilb = \hilb_n$.
In the following, we impose $\varphi \in \mathscr{C}^2(\xx, \hilb)$
and $\forall q \in \hilb$, the map $x \mapsto
\braket{\varphi(x),q}_{\hilb}$ ought to belong to $\ceva$. Let us
also define the adjoint operator of $\Phi : \radon \to \hilb$ in the
\wast\ topology, namely the map $\opadj : \hilb \to \ceva$, defined
for all $x \in \xx$ and $p \in \hilb$ by $\opadj(p) (x) =
\braket{p,\varphi(x)}_{\hilb}$.
The choice of $\varphi$ and $\hilb$ is dictated by the physical
acquisition process \cite{Laville2021}, with generic measurement
kernels such as convolution, Fourier, Laplace, \etc .

\subsubsection{An off-the-grid functional: the BLASSO}

Consider the source measure $\mzer \eqdef \sum_{i=1}^N a_{0,i}
\delta_{x_{0,i}}$ with amplitudes $\vb{a_0} \in \R^N$ and positions
$\vb{x_0} \in \xx^N$, the sparse spike problem aims to recover this
measure from the acquisition $y \eqdef \Phi \mzer + w$ where $w \in
\hilb$ is an additive noise.
In order to tackle this inverse problem, let us introduce the
following convex functional called BLASSO \cite{Bredies2012,
Castro2012}, standing for Beurling-LASSO:

\begin{align}
  \argmin_{m \in \radon} T_\lambda(m) \eqdef \dfrac{1}{2} \norm{y -
  \Phi(m)}^2_{\mathcal H} + \lambda \mtv,
  \tag{\ensuremath{\mathcal{P}_\lambda (y)}}
  \label{eq:blasso-bruits}
\end{align}

where $\lambda > 0$ is the regularisation parameter accounting for
the trade-off between fidelity and sparsity of the reconstruction.
Then thanks to convex analysis results, one can establish the
existence of solutions to \eqref{eq:blasso-bruits} as proved in
\cite{Bredies2012}. The difficulties also lie in the question of
uniqueness of the solution and correct support recovery: these
questions were addressed in several papers in the literature
\cite{Duval2014, Denoyelle2016}.

A very general result in the calculus of variations community called
the \emph{representer theorem} has been thoroughly applied to
numerous off-the-grid energies, such as the BLASSO. Let $G:\hilb_n
\to \R$ an arbitrary data fitting term; suppose that $\Lambda :
\Omega \to \hilb_n$ is linear and $F:\Omega \to \R$ is convex, and
for $m \in \Omega$ the energy reading $J(m) \eqdef G(\Lambda m) +
F(m)$. An extreme point of a convex set $S$ is a point $x \in S$ that
cannot be expressed as a convex combination of two distinct points in
$S$, we denote by $\Ext(S)$ the set of extreme points of $S$.
The following theorem, due to \cite{Boyer2018, Bredies2019b},
establishes (up to some hypotheses) the link between the minimisers
of the functional $J$ and the extreme points of the unit ball
$\unitball_1^F \eqdef \{ u \in \Omega, F(u) \leq 1 \}$:

\begin{theorem}[Representer theorem]
  If $F$ is semi-norm, there exists $\bar u \in \Omega$, a minimiser
  of $J$ with the representation:
  \begin{align*}
    \bar u  = \sum_{i=1}^p \alpha_i u_i
  \end{align*}
  where $p \leq \dim \hilb_n$, $u_i \in \Ext(\unitball_1^F)$ and
  $\alpha_i > 0$ with $\sum_{i=1}^p \alpha_i = F(\bar u )$.
  \label{th:rpz}
\end{theorem}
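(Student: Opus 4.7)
The plan follows the strategy of \cite{Boyer2018, Bredies2019b}. First I would establish the existence of at least one minimiser of $J$ by the direct method of the calculus of variations. Since $F$ is a semi-norm, it is convex and nonnegative, and in the ambient setting (typically $\Omega = \radon$ equipped with the weak-* topology) its sub-level sets are weak-* compact by Banach--Alaoglu. A minimising sequence $(u_n)$ then satisfies $F(u_n) \leq C$, so one extracts a weak-* convergent subsequence $u_n \cvet \bar u$ and passes to the limit using weak-* lower semi-continuity of $F$, continuity of $G$, and weak-* continuity of $\Lambda$ (whose range $\hilb_n$ is finite-dimensional).

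Second, I would reduce the problem to an extreme-point search on a well-chosen constrained convex set. Fix any such minimiser $\bar u$ and introduce
\begin{align*}
K \eqdef \bigl\{u \in \Omega \,:\, \Lambda u = \Lambda \bar u \text{ and } F(u) \leq F(\bar u)\bigr\}.
\end{align*}
Then $K$ is non-empty, convex, and weak-* compact, as the intersection of the weak-* compact ball $F(\bar u)\cdot \unitball_1^F$ with the weak-* closed affine subspace $\Lambda^{-1}(\Lambda \bar u)$. For any $u \in K$ one has $G(\Lambda u) = G(\Lambda \bar u)$ together with $F(u) \leq F(\bar u)$, so every element of $K$ is itself a minimiser of $J$. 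It therefore suffices to exhibit a well-chosen $\bar u^\star \in K$ enjoying the announced representation; by Krein--Milman, $\Ext(K) \neq \emptyset$.

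Third -- and this is the heart of the proof -- I would characterise $\Ext(K)$ in terms of $\Ext(\unitball_1^F)$. Pick $\bar u^\star \in \Ext(K)$ and apply Choquet's integral representation theorem to the metrisable weak-* compact convex set $F(\bar u)\cdot \unitball_1^F$: one obtains a probability measure $\mu$, supported (after rescaling) on $\Ext(\unitball_1^F)$, such that $\bar u^\star = \int v \ud \mu(v)$. The constraint $\Lambda u = \Lambda \bar u$ then translates into a finite-dimensional moment identity on $\mu$ in $\hilb_n$, to which a Carathéodory-type reduction applies and replaces $\mu$ by a discrete measure supported on at most $\dim \hilb_n$ atoms $u_i \in \Ext(\unitball_1^F)$ with weights $\alpha_i > 0$. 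Extremality of $\bar u^\star$ in $K$ forces the finite combination to coincide with $\bar u^\star$, i.e. $\bar u^\star = \sum_{i=1}^p \alpha_i u_i$, and the $1$-homogeneity of $F$ yields the mass identity $\sum_{i=1}^p \alpha_i = F(\bar u^\star)$.

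The main obstacle, to my mind, lies precisely in this last step: bridging the abstract Krein--Milman/Choquet apparatus, which lives on the possibly non-metrisable dual space $\Omega$, with the finite-dimensional Carathéodory reduction in $\hilb_n$, while controlling measurability of the slice map $v \mapsto \Lambda v$ on $\Ext(\unitball_1^F)$ and ruling out pathological dispersion of the representing measure $\mu$. This is precisely the technical content developed at length in \cite{Boyer2018, Bredies2019b}, to which one could ultimately appeal rather than reprove in full.
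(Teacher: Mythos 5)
The paper does not actually prove this theorem: it states it as a known result and defers entirely to \cite{Boyer2018, Bredies2019b}. Your proposal reconstructs the architecture of those proofs correctly in outline --- existence, restriction to the set $K=\{u\in\Omega : \Lambda u=\Lambda\bar u,\ F(u)\le F(\bar u)\}$ on which every point is a minimiser, then a finite-dimensional reduction of an extreme point of $K$ --- but two steps as written are genuinely wrong or missing. The first is the compactness claim: you assert that sublevel sets of $F$ are weak-$\ast$ compact ``by Banach--Alaoglu'' because $F$ is a semi-norm. Banach--Alaoglu applies to the ball of the dual \emph{norm}, not of an arbitrary semi-norm: a semi-norm has a kernel $\ker F$, possibly an infinite-dimensional subspace, along which $\unitball_1^F$ is unbounded, so neither $\unitball_1^F$ nor $K$ is compact in general and Krein--Milman/Choquet cannot be invoked directly. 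Handling this --- by assuming $\ker F$ finite-dimensional with $G\circ\Lambda$ coercive on it, or by quotienting by $\ker F$ and lifting back --- is a substantial part of \cite{Bredies2019b}, not a detail. (For the paper's actual use case, where $F$ is the total variation norm on $\radon$, the claim is fine, but the theorem is stated for semi-norms.)

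The second issue is the count $p\le\dim\hilb_n$. A Choquet representation followed by naive Carath\'eodory in $\hilb_n\cong\R^n$ yields at most $\dim\hilb_n+1$ atoms, not $\dim\hilb_n$. The sharper bound uses the positive homogeneity of $F$: the representing measure can be taken on the ``sphere'' $\{F=F(\bar u)\}$, and the mass normalisation $\sum_i\alpha_i=F(\bar u)$ absorbs one dimension (equivalently, one applies a Dubins/Klee-type argument to the cone generated by $\unitball_1^F$). You state the mass identity but never connect it to the improved count. Finally, your closing inference --- that extremality of $\bar u^\star$ in $K$ forces the discrete combination to equal $\bar u^\star$ --- is neither justified (the atoms $u_i$ need not lie in $K$, so no convex decomposition of $\bar u^\star$ within $K$ is exhibited) nor needed: the discrete combination lies in $K$, hence is itself a minimiser of $J$, and the theorem is existential.
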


This result is significant because it connects the geometry of
minimisers to the structure of the regulariser $R$, and more
precisely, to the extreme points of its unit ball $\unitball_1^F$ (or
sublevel sets). Well-known examples include $\ell_1$ regularisation,
so the BLASSO energy, whose extreme points are Dirac measures. More
recently, a regulariser for divergence vector field/solenoid has also
been proposed that promotes curve-like structures \cite{Laville2023},
albeit within a static framework.
Leveraging the representer theorem is a thriving approach, which
amounts to characterising the structural properties of minimisers
without the need to explicitly solve the optimisation problem
\cite{Boyer2018}. Moreover, understanding the extreme points of the
unit ball of the regulariser is crucial for numerical implementation;
in particular, the Frank–Wolfe algorithm reconstructs a solution by
iteratively adding extreme points of the $R$-unit ball to the current estimate.

Now, we need to expand the BLASSO framework to the \emph{dynamic}
setting, to incorporate some dependence in time. In the latter, the
observation is a stack of images, hence a sequence of images ordered
by time: thereby, the point sources should be \emph{dynamic} and
further carry out a time dependence.

\subsection{Dynamic off-the-grid point sources setting}
\subsubsection{The time cone framework and the push-forward measure
  from \texorpdfstring{ $\mathcal M(\Gamma)$}{} to
\texorpdfstring{$\mathcal M (\Omega)$}{}}

In the seminal papers \cite{Bredies2022,Bredies2019,Bredies2019a} on
off-the-grid dynamic methods, the time-cone $\Omega \eqdef [0,1]
\times \xx $ is introduced as a natural framework for representing
time-dependent sparse measures. The goal of the reconstruction is to
recover multiple points that evolves over time, which can be modelled
as a measure $\rho \in \mathcal  M(\Omega)$, where $\rho(t, x)$
represents the spatial distribution of the measure at time $t$. This
formulation allows the problem to be expressed as a variational
optimisation problem, where one seeks to minimise a cost functional
involving data fidelity and regularisation. To ensure temporal
smoothness, the measure $\rho(t, x)$ is required to satisfy a
continuity equation of the form $\pdv{t} \rho + \nabla \cdot (\rho v)
= 0$, where $v$ is a velocity field governing the transport of mass.
While $\mathcal M(\Omega)$ provides a useful representation of
time-dependent measures, it does not explicitly encode trajectories
of moving sources. To better capture the underlying dynamical
structure, the problem is reformulated in terms of measures on paths,
transitioning from $\mathcal M(\Omega)$ to $\mathcal M(\Gamma)$,
where $\mathcal M(\Gamma)$ represents distributions over smooth
particle trajectories $\Gamma$.

This reformulation allows each point source to be described by a
continuous path $\gamma = (h, \xi)$, where $h(t)$ denotes its mass
and $\xi(t)$ its trajectory over time $t$. By leveraging this
representation, the problem inherently enforces smooth motion
constraints and naturally integrates optimal transport
regularisation. This transformation is justified by measure
disintegration techniques, enabling one to decompose the measure as a
product separating time and spatial components. Moreover, it
establishes a one-to-one correspondence between the solutions of the
continuity equation in $\mathcal M(\Omega)$ and measures on $\mathcal
M(\Gamma)$. From a computational perspective, this approach enables
off-the-grid optimization methods, avoiding the need for space-time
discretisation while facilitating efficient implementation through
graph-based shortest-path algorithms \cite{Tovey2021}.

Thus, working in $\mathcal M(\Gamma)$ leads to a more structured and
computationally tractable formulation of dynamic inverse problems.
In the following section, we give a precision definition of these
spaces and Benamou-Brenier regularisation, ensuring that the
reconstructed point sources follow smooth trajectories over time.
Consequently, since each spike is detected based on the discretised
observation, this penalty reduces the set of possible paths to only
those that are smooth and physically meaningful.

\subsection{An energy for trajectories recovery}

Let $T \in \N^*$ be the number of time samples, further defining
$(t_i)_{1\leq i\leq T}$ the time slices ranging from 0 to 1: $0 \leq
t_i \leq 1$, and $\hilb$ is an Hilbert space. The following
functional is the core component of the dynamic off-the-grid
framework \cite{Bredies2019, Bredies2019a} so far:

\begin{align}
  \argmin_{\sigma\in \mathcal{M}(\Gamma)} E(\sigma) =
  \argmin_{\sigma\in \mathcal{M}(\Gamma)} \sum_{i=1}^T \norm{A_i
  e_{t_i \sharp} \sigma - b_{t_i} }_\hilb + \int_{\Gamma} w(\gamma)
  \ud \sigma(\gamma),
  \label{eq:optim_sigma}
\end{align}

We ought to precise some of the quantities introduced here:

\begin{itemize}
  \item $t \mapsto b_t \in \hilb$ are the acquired data called the
    \emph{observation},
  \item $\Gamma = \left\{\gamma=(h,\xi), \quad h\in C([0,1],
      \R),\quad \xi : [0,1] \xrightarrow{} \xx, \quad \xi_{|h\neq 0}
    \text{ is continuous} \right\}$ is the space of trajectories with
    $h$ the amplitude weighting the mass and $\xi$ the curve,
  \item $e_t$ is the measurable map of evaluation at time $t$,
    defined by $e_t(\gamma) = \gamma(t)$ hence $e_{t \sharp} \sigma
    \in \mathcal{M}(\Omega),$
  \item $A_i : \mathcal{M}(\Omega) \xrightarrow{} \hilb $ a linear
    operator, pertaining to the physical context (in the case of
    super-resolution, the Gaussian convolution for example). The $i$
    index shows that the operator can incorporate some changes over
    time (such as the spread of the Gaussian convolution),
  \item $w : \Gamma \xrightarrow{} \R_+$ a weight function, usually
    $w(\gamma) = \int_0^T \alpha + \beta |\Dot{\gamma}|^2(t) \ud t$
    for $\alpha, \beta > 0$. Once integrated and for $\alpha=0$, it
    constitutes the Benamou-Brenier regularisation; it only caters
    for the balanced case, \ie\ conserved mass such as $\gamma=(1,\xi)$ .
\end{itemize}

\begin{remark}
  $\mathcal{M}(\Gamma)$ is well-defined as a \emph{complete separable
  metric space}, since $\Gamma$ is neither locally compact nor
  $\sigma$-compact. See \cite{Tovey2021} for more grounded arguments.
\end{remark}

This lies into the general framework, now set $\hilb=\R^n$ and:

\begin{align*}
  A_i : \rho \in \mathcal{M}(\Omega) \mapsto
  \left(\int_{\Omega}a_i^j(x)\ud \rho(x) \right)_{1\leq j \leq n} \in \hilb.
\end{align*}

Also we need to reduce ourselves to physically meaningful curves,
with some continuity assumption \cite[Definition 1.1.1]{Ambrosio2008}:

\begin{definition}[Absolutely continuous curves]
  Absolutely $p$-continuous curves in a metric space $(X,d)$ where
$p\in ]1,+\infty]$ are curves $\gamma : [0,1] \longrightarrow X$ such
that there exists $\zeta \in L^p([0,1], \R)$ and for all $s,t$ \st{}
$0\leq s\leq t\leq 1$:

\begin{equation}
d \left(\gamma(t),\gamma(s) \right) \leq \int^t_s \zeta (u)\ud u.
\label{eq:AC_def}
\end{equation}

The set of such curves is denoted $\AC^p([0,1],X)$, or simply
$\AC([0,1],X)$ if $p=1$.
\label{def:AC_curve}
\end{definition}

More properties on absolute continuous curves such as their metric
derivative, equal to the metric-norm of the classical derivative, are
expressed in Appendix \ref{app:riemann-recall}. It gives rigorous
definitions of these concepts for the Riemannian generalisation
involved in the following sections.
Now, we recall the main result \cite[Theorem 10]{Bredies2019} for the
Benamou-Brenier regularisation.

\begin{theorem}[Solutions of balanced recovery from]
The energy $E$ admits a minimiser $\rho^*$ which is a finite sum of
extreme points of the Benamou-Brenier unit ball, \ie\:
\begin{align*}
\exists a_i >0, \xi_i \in \AC^2([0,1],\xx), \quad \forall t \in
[0,1], \quad \rho^*(t) = \sum_{i=1}^T a_i \delta_{\xi_i(t)}
\end{align*}
\end{theorem}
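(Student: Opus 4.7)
The plan is to deduce this theorem as a direct consequence of the representer theorem~\ref{th:rpz}, once existence of a minimiser of $E$ has been secured by the direct method of the calculus of variations.

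First I would verify existence. Each data fitting contribution $\sigma \mapsto \|A_i e_{t_i\sharp}\sigma - b_{t_i}\|_\hilb$ is convex and narrowly continuous on $\mathcal{M}(\Gamma)$, since the evaluation push-forward $e_{t_i\sharp}$ is narrowly continuous and $A_i$ is bounded with finite-dimensional range. The Benamou-Brenier weight $w(\gamma) = \int_0^1 \alpha + \beta|\dot\gamma|^2\,\mathrm{d}t$ (with $\alpha=0$, $h\equiv 1$) is coercive: bounded sublevel sets of $F(\sigma) = \int_\Gamma w(\gamma)\,\mathrm{d}\sigma(\gamma)$ concentrate on equi-$\tfrac12$-Hölder families of trajectories, compact in the sup-norm by Ascoli-Arzelà, whence tightness and narrow relative compactness of sublevel sets of $E$. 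A minimising sequence then admits a narrow cluster point $\rho^*$ that realises the infimum.

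Next I would recast $E$ in the template of Theorem~\ref{th:rpz}. Take $\Lambda : \mathcal{M}(\Gamma) \to \hilb^T$, $\Lambda\sigma = (A_i e_{t_i\sharp}\sigma)_{i=1}^T$, which is linear with finite-dimensional codomain; $G(z) = \sum_{i=1}^T \|z_i - b_{t_i}\|_\hilb$, convex; and keep $F$ as above, which is positively $1$-homogeneous on the positive cone of $\mathcal{M}(\Gamma)$ (and a semi-norm on the subspace it generates). The representer theorem then yields a minimiser of the form $\rho^* = \sum_{k=1}^p \alpha_k u_k$, with $\alpha_k > 0$, $p$ controlled by the number of scalar constraints, and each $u_k$ an extreme point of the unit ball $\unitball_1^F$.

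The decisive step is the characterisation $\mathrm{Ext}(\unitball_1^F) = \bigl\{\tfrac{1}{w(\gamma)}\delta_\gamma : \gamma = (1,\xi),\ \xi \in \AC^2([0,1],\xx)\bigr\}$ established in~\cite{Bredies2019b}: any non-Dirac element $\sigma$ of $\unitball_1^F$ can, via disintegration, be written as an average $\int_\Gamma \delta_\gamma\,\mathrm{d}\sigma(\gamma)$ of normalised Diracs already lying in the unit ball, contradicting extremality. Plugging this back into the sparse decomposition and applying the pointwise evaluation map $e_t$ converts each $\delta_{\gamma_k}$ into $\delta_{\xi_k(t)}$ and gives the announced representation. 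The hard part will be precisely this extremal-point characterisation, since the regulariser is not a conventional norm but an integrated $1$-homogeneous weight, so the disintegration must be compatible with $w$ and one must use the strict convexity of $\xi \mapsto \int_0^1 |\dot\xi|^2$ to rule out non-trivial convex combinations; the secondary subtlety is coercivity in $\mathcal{M}(\Gamma)$, which is not locally compact (cf.\ the preceding remark) and is dealt with via tightness of the sublevel sets of $F$.
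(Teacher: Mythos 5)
Your strategy --- existence by the direct method, then the representer theorem~\ref{th:rpz} combined with the characterisation of the extreme points of the Benamou--Brenier unit ball as normalised Diracs $w(\gamma)^{-1}\delta_\gamma$ --- is exactly the route the paper takes: the statement is recalled from \cite{Bredies2019}, and the supporting extreme-point lemma is proved in Appendix~\ref{app:extreme}. The one caveat concerns your sketch of that lemma: writing $\sigma$ as a barycentre $\int_\Gamma \delta_\gamma\,\mathrm{d}\sigma(\gamma)$ does not by itself contradict extremality (every element of the ball is such a barycentre), so the argument must instead split the support of a candidate $\sigma$ into two disjoint pieces to produce a genuine two-point convex decomposition, as the appendix does --- and in that $\mathcal{M}(\Gamma)$ formulation only positivity and lower semicontinuity of $w$ are used, not strict convexity of the kinetic energy.
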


The unbalanced case, \ie\ with varying amplitude, has been treated in
the literature through the Wasserstein-Fisher-Rao (WFR)
regularisation \cite{Bredies2022a,Tovey2021}. In the following, we
will introduce our Riemannian framework for the balanced case, but we
stress that our ideas hold even for WFR regularisation, which is more
suited to the biomedical case we strive to solve.

The seminal papers have proven the efficiency of this optimal
transport based method, as it has been shown to successfully recover
the paths of moving spikes. However, the numerical implementation
struggles to recover crossing paths: this is an expected behaviour,
as the algorithm cannot infer the correct paths from the dual
certificate (see \cite{Duval2014}). This ambiguity is inherent to the
considered energy: in the following, we will explore our proposed
workaround which exploits a lifting to the space of roto-translation
inspired by \cite{Chambolle2019, duits2018optimal}, while formulating
a new energy with a Riemannian regularisation tailored for paths untangling.

\section{Our proposed lifting: an insight in Riemannian and numerics}

\label{sec:theory}
\subsection{The orientation-position space}
As we mentioned earlier, the original curve reconstruction model
struggles to recover multiple crossing curves in 2D. Indeed, the
regularising term in \eqref{eq:optim_sigma} selects solutions with
minimal length in the 2D plane, thus preferring couples of
nearly-touching curves to crossing curves. This is all the more
unfortunate as this formulation does not fully address the problems
encountered in the biomedical applications: as we pointed out before,
the curves frequently crossed themselves on the acquisition, and the
method above cannot handle well this rather frequent case.
To prevent such issue, we propose an approach consisting in lifting
our setting $\xx$ in the space of position and orientation, adding an
orientation coordinate to our 2D variable.

Hence, we lift the problem in a suitable space to allow the
disentanglement, further inspired by the variational approach
proposed by \cite{Chambolle2019}. The orientation-position space,
also called the roto-translation space in the Lie group literature,
is understood as a homogeneous space upon which the \emph{Special
Euclidean group} $\SE(d) = \R^d \rtimes \SO(d)$ acts transitively and
faithfully. $\SO(d)$ the d-dimensional rotation group, \ie\ the group
of all rotations about the origin of $d$-dimensional Euclidean space
$\R^d$ under the operation of composition.

As mentioned earlier, $\xx \subset \R^d$ is then the spatial domain
where the acquisition is defined, and $\sphere^{d-1}$ thereby
parametrises the local orientation of the curve.

\begin{definition}
  The homogeneous space of orientation-position in $\R^d$ is:
  \begin{align*}
    \rototrans_d \eqdef \R^d \times \sphere^{d-1}.
  \end{align*}

  We fill also refer to it as the \emph{roto-translation} space.
\end{definition}

\begin{remark}
  The semidirect product, denoted by $\rtimes$, fundamentally differs
  from the direct product ($\times$) in that it allows for a
  non-commutative interaction between its constituent groups. Unlike
  a direct product where elements from one subgroup commute with
  elements from the other, in a semidirect product, one subgroup acts
  on the other. Consequently, the composition of elements, such as
  two roto-translations, is generally order-dependent. Stricly
  speaking, $\rototrans_d \eqdef \R^d \rtimes \sphere^{d-1}$ but
  since we do not compose elements of $\rototrans_d$, choosing the
  semidirect product or the direct product here makes no substantial difference.
\end{remark}

It is a $(2d-1)\text{-dimensional}$ manifold, in the following we
will consider examples in $d=2$ since our biomedical application is
set on 2--dimensional images and we feel that this case is more
pedagogical: however, note that our solution still applies to any $n$D setting.
The space $\rototrans_2$ is useful because it allows one to
differentiate objects with the same position but with different
orientations, which is a common feature in $2D$ vessel images, as the
projection in a $2D$ image of a tree-like structure gives rise to crossings.

If needed and for computational ease, we will identify $n =
(\cos(\theta), \sin(\theta))\in \mathbb{S}^1 \longleftrightarrow
\theta \in \R/\mathbb{Z} \longleftrightarrow R_{\theta} \in \SO(2)$,
since $\sphere^1$ is isomorphic to $\SO(2)$. As of now, $N \in \N^*$
control points\footnote{Control points are a set of fixed points used
  to define and shape parametric curves, most notably Bézier curves,
  B-splines, and NURBS. They do not usually lie on the curve itself
(except the endpoints), but they determine its form and geometry.}
will no longer belong to $\xx^N$ but rather in the manifold
$\left(\rototrans_d \right)^N$ to further incorporate orientation
information in the curve. The roto-translational curve space is then:

\begin{align*}
  \Upsilon \eqdef \left\{\gamma=(h,\xi), \quad h\in C([0,1],
    \R),\quad \xi : [0,1] \xrightarrow{} \rototrans_d,\quad \xi_{|h\neq
  0} \text{ is continuous} \right\}
\end{align*}

and the measure space $\mathcal M (\Upsilon )$ follows naturally.
Eventually, note that the original roto-translational space
$\rototrans_d$ is a Riemannian manifold, hence a vector space with
more structure thanks to the metric tensor. This Riemannian structure
will be leveraged to further implement the regularisation on curves
crossing, see Figure \ref{fig:roto-illustration}, but beforehand it
calls for a quick introduction.

\subsection{A relaxed metric on the orientation-position space}

We ought to begin this section with a recall on differential
geometry, in particular for Riemannian or flavour-liked manifold, as
we strive to make this paper self-contained. Note that not only these
notions are mandatory to define our metric/regularisation, but they
are also needed to perform the numerical implementation through
Riemannian optimisation. Be careful as in this section, $\gamma$
denotes a geodesic and not the couple $(h,\xi) \in \Gamma$ of
amplitude and path.

\subsubsection{Riemannian geometry basics}
\label{sec:riemann}

As in 1946, Élie Cartan\footnote{Pr. Élie Joseph Cartan (1869-1951)
  was a French mathematician known for his celebrated work on Lie
  groups, mathematical physics and differential geometry -- a field he
contributed to revitalise.} stated that \emph{'the general notion of
manifold is quite difficult to define precisely'} \cite{Cartan1946};
in this vein the notions exhibited below will be defined rather
informally, as we think the reader should focus more on the
underlying idea than the formal setting. However, we strongly advise
the interested reader, willing to learn more about Riemannian
geometry, to take a glance at \cite{Lee2019, Agrachev2020,
Gudmundsson2002, Schrodinger2009} for a rather more far-reaching presentation.

Let $\M$ be a smooth $d$-manifold, namely a space where one can lay
down everywhere local coordinates $(x^1, \dots, x^d)$
\ie\ homeomorphism mapping open set of $\M$ to an open set of $\R^d$.
It is a generalisation of the notion of smooth surface of $\R^d$,
that one can bend, twist, and curve smoothly in any direction without
any creases or sharp corners; bar the manifold does not rely on the
ambient space for its definition. A smooth manifold is then a space
that locally resembles a Euclidean space.
On each point $p \in \M$ one can define the tangent space $\Tang_p
\M$: it can be seen as a linearisation of the manifold at $p$. A
tangent space is a vector space, enjoying an ordered basis.
The union of all point of the manifold and their associated tangent
space is called the tangent bundle $\TM \eqdef \left\{(p,v) \,\vert\,
p \in \M, v \in \Tang_p\M \right\}$.

The manifold can be equipped with a metric $g$ \ie\ a symmetric
covariant tensor field, also denoted $g_{ij}$ in tensor notation,
defined on the tangent bundle. The restriction at $p \in\M$ is
denoted by $g_p: \Tang_p\M \times \Tang_p\M \to \R$, it is an inner
product that maps two vectors lying on the tangent space of $p$ onto
a real number. Loosely speaking, a Riemannian manifold $(M,g)$ is a
smooth manifold with a smoothly varying inner product $g_p$ on the
associated tangent spaces. We further denote $\norm{\cdot}_g \eqdef
\sqrt{g(\cdot, \cdot)}$ the metric norm\footnote{The $p \in \M$ index
  are dropped for clarity and conciness, but it is important to
remember that the metric norm depends on the point $p$ where it is evaluated.}.

The metric admits a contravariant inverse tensor reading $g^{-1}$, or
$g^{ij}$. It is uniquely defined by $\delta^{i}_k = g_{jk} g^{ij}$
where $\delta_{ij}$ is the Kronecker symbol. A core component of
Riemannian geometry is the geodesic, the generalisation in the
Riemannian manifold $(\M, g)$ context of straight lines of the
Euclidean setting.

\begin{definition}[Geodesic]
  \label{def:geodesic}
  In the sense of the calculus of variations, a \emph{geodesic} is a
  continuously differentiable $\gamma : [0,1] \to \M$ such that it is
  a minimum of the energy:

  \begin{align}
    E_\mathrm{c}(\gamma) \eqdef \dfrac{1}{2} \int_0^1 g_{\gamma(t)}
    \left(\gamma'(t), \gamma'(t) \right) \ud t.
    \label{eq:kinetic-geodesic}
  \end{align}

  A geodesic can be thought of as a free particle $\gamma(t)$ such
  that its acceleration has no component in the direction of the
  surface, hence at each point $p = \gamma(t)$ of the curve it is
  perpendicular to the tangent plane $\Tang_p \M$ of the
  manifold\footnote{The motion of the particle is then of constant
  speed and solely defined by the bending of the surface.}.
\end{definition}

A Riemannian manifold can then be endowed with the \emph{geodesic metric}, then

\begin{align}
  \forall (p, q)\in \M^2, \quad \dg(p,q) \eqdef
  \inf_{\substack{\gamma \in \AC([0,1],X) \\ \gamma(0) =p, \gamma(1)
  =q}} \int_0^1 \norm{\gamma(t)}_g^2 \ud t.
  \label{eq:geodesic-distance}
\end{align}

Consider a smooth map on the manifold $f:\M \to \R$, its Riemannian
gradient at $x$ lies in the tangent space $\mathrm{grad} f \in
\Tang_x\M$. For $\M \subset \R^d$, it is related to the Euclidean
gradient $\grad_\R f : \R^d \to \R$, the classical gradient defined
in the ambient space, by:

\begin{align*}
  \grad f = g^{-1} \grad_\R f, \quad \mbox{also written } \grad f_i =
  g^{ij} \grad_\R f_j.
\end{align*}

Obviously, a vector $v$ of the tangent plane $\Tang_x\M$ has no
reason to belong to the manifold, and an iterate of the gradient
descent might then escape the manifold. A tool from the differential
geometry can then be leveraged:

\begin{definition}[Exponential map]
  \label{def:exp-map}
  Let $v \in \Tang_p \M$ a tangent vector of the manifold at point
  $p$. Then, there exists a unique geodesic $\gamma_v : [0,1] \to \M$
  such that $\gamma(0) = p$ and $\gamma'_v(0) = v$. The
  \emph{exponential map} is defined by:
  \begin{align*}
    \Exp_p(v) = \gamma_v(1).
  \end{align*}
  The application $\Exp_p$ acts as a local diffeomorphism from the
  tangent space $T_p \M$ to $\M$. When defined, its inverse $\Log_p:
  \M \to \Tang_p M$ is called the \emph{logarithmic map}.
\end{definition}

The optimisation in the Riemannian setting is a powerful tool
leveraging the structure offered by the manifold: as the solution
must lie in $M$, each iteration remaps the new iterate onto the
manifold with the exponential map. In contrast, Euclidean
optimisation may escape the manifold and might struggle to converge
to the solution, getting trapped into local minima outside the set,
in the ambient space (if defined).

In Riemannian geometry and in particular for optimisation, we may
need to move a vector defined on a tangent space $\Tang_x\M$ to
another tangent space $\Tang_y\M$. The new tangent space $\Tang_y\M$
enjoys an ordered basis, differing from those of $\Tang_x\M$: these
changes in the two coordinate systems are encapsulated in the
Christoffel symbols $\Gamma^k_{ij}$: it accounts for the change in
the $i$-th component caused by a change of the $j$-th component,
computed in the $k$-th component direction. Christoffel symbols
completely define the metric tensor, and conversely; it also comes
handy for second order Riemannian optimisation scheme \footnote{As
  the Riemannian Hessian of $f$ function of $(x_1, \dots, x_d)
  \in\M$ is defined by:

  \begin{align*}
    \Hess f_{ij} \eqdef \pdv{f}{x_i}{x_j} - \Gamma^k_{ij} \pdv{f}{x_k}.
  \end{align*}
}.
Moreover, it is useful for geodesics computation as it appears in
this fundamental ordinary equation:

\begin{theorem}[Geodesic equation]
  Given initial conditions $p \in \M$, $v \in \Tang_p \M$, a geodesic
  $\gamma: [0,1] \to \M$ is a  solution of:

  \begin{align}
    \dv[2]{\gamma^k}{t} + \Gamma_{ij}^k \dv{\gamma^i}{t}  \dv{\gamma^j}{t} = 0,
    \label{eq:geodesic}
  \end{align}

  with $\gamma(0) = p$ and $\gamma'(0) = v$. This is simply the
  Euler-Lagrange equations of the action in equation
  \eqref{eq:kinetic-geodesic}, expressed in local coordinates:
  $\gamma^i \eqdef x^i \circ \gamma (t)$.
\end{theorem}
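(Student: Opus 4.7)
The plan is to recognise the claimed equation as the Euler--Lagrange system of the kinetic action $E_\mathrm{c}$ and derive it by a direct computation in local coordinates, then appeal to Cauchy--Lipschitz for existence and uniqueness given the initial data $(p,v)$. First I would fix a chart around $p$ and express the action as
\begin{align*}
E_\mathrm{c}(\gamma) = \frac{1}{2} \int_0^1 g_{ij}(\gamma(t))\, \dv{\gamma^i}{t}\, \dv{\gamma^j}{t}\, \ud t,
\end{align*}
with Lagrangian $L(x,\dot x) = \tfrac{1}{2} g_{ij}(x) \dot x^i \dot x^j$. The smoothness of $g$ and of its inverse $g^{-1}$ ensure that $L$ is $C^2$ in both variables, so that the standard calculus of variations machinery applies and a minimiser of $E_\mathrm{c}$ must satisfy the classical Euler--Lagrange system $\dv{t} \pdv{L}{\dot \gamma^k} - \pdv{L}{\gamma^k} = 0$.

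Next I would compute the two derivatives. Since $g$ is symmetric, $\pdv{L}{\dot\gamma^k} = g_{kj}(\gamma)\,\dv{\gamma^j}{t}$, whose time derivative, by the chain rule, equals $g_{kj}\ddot\gamma^j + \partial_l g_{kj}\,\dot\gamma^l\dot\gamma^j$. Symmetrising the second summand in the pair $(l,j)$ (which is legitimate because it is contracted against $\dot\gamma^l\dot\gamma^j$) and subtracting $\pdv{L}{\gamma^k} = \tfrac{1}{2}\partial_k g_{ij}\,\dot\gamma^i\dot\gamma^j$ yields
\begin{align*}
g_{kj}\,\ddot\gamma^j + \tfrac{1}{2}\bigl(\partial_i g_{kj} + \partial_j g_{ki} - \partial_k g_{ij}\bigr)\dot\gamma^i\dot\gamma^j = 0.
\end{align*}
I would then recognise the bracket as the Christoffel symbol of the first kind $\Gamma_{k,ij}$ and contract with the inverse metric $g^{mk}$, using $g^{mk} g_{kj} = \delta^m_j$, to obtain exactly $\ddot\gamma^m + \Gamma^m_{ij}\dot\gamma^i\dot\gamma^j = 0$, which is the claimed equation. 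Up to renaming the free index $m$ into $k$, this is \eqref{eq:geodesic}.

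For the initial value part, having rewritten the system in local coordinates as a second order ODE $\ddot\gamma^k = -\Gamma^k_{ij}(\gamma)\dot\gamma^i\dot\gamma^j$ with smooth right-hand side, I would convert it into a first order system on $T\M$ and invoke the Picard--Lindelöf theorem. This guarantees a unique maximal solution with $\gamma(0)=p$ and $\gamma'(0) = v$; smoothness of $g$ ensures the vector field is locally Lipschitz and hence the solution exists on a neighbourhood of $0$, which can be extended to $[0,1]$ provided $v$ is small enough or $\M$ is geodesically complete (for instance by compactness assumptions on the manifold, satisfied in our $\rototrans_d$ setting where the issue reduces to the compact factor $\sphere^{d-1}$ and a bounded spatial domain).

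The only genuine subtlety, and what I would expect to be the main obstacle to write cleanly, is the symmetrisation step that turns $\partial_l g_{kj}$ into the combination defining the Christoffel symbols: it relies on the fact that $\dot\gamma^i\dot\gamma^j$ is symmetric in $(i,j)$, allowing a free relabelling of dummy indices, which is routine but easy to miscount when using Einstein notation. Conceptually, the derivation is a direct application of the Euler--Lagrange principle and introduces no non-standard tools beyond the chart representation already laid out in Section \ref{sec:riemann}.
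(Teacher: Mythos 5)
Your derivation is correct and follows exactly the route the paper itself indicates: the paper offers no written proof beyond the remark that the equation is the Euler--Lagrange system of the kinetic action $E_\mathrm{c}$ in local coordinates, and your computation (symmetrisation of $\partial_l g_{kj}$ over the contracted indices, identification of the Christoffel symbols of the first kind, contraction with $g^{mk}$, then Picard--Lindel\"of for the initial-value problem) is the standard expansion of that remark. Nothing is missing.
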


Eventually, for the optimisation in a Riemannian setting, we advise
the reader to look up the Appendix \ref{app:riemannian-optim}.

These notions are rather general but compulsory as we work with the
$\rototrans^d$ manifold, hence we need to perform a optimisation
exploiting the underlying Riemannian structure. We can now leverage
this knowledge with our proposed lift on the rototranslation space
$\rototrans_d$.

\subsubsection{The Reeds-Shepp metric}
Multiple models have taken advantage of the space of roto-translation
$\rototrans_d$, among which applications we could quote computing
geodesics taking into account orientation features and penalising
curvature. Usually used to model vehicles with forward and reverse
gears or oriented objects in $\R^d,$ the Reeds-Shepp metric
\cite{Reeds1990} has numerous applications in various context. It is
a sub-Riemannian\footnote{A sub-Riemannian manifold is a Riemannian
  manifold coupled with a constraint on admissible directions of
movements.} metric that prevents curves from being planar and
penalises changes of direction (meaning penalising curvature of the
trajectory). We will use a relaxed Riemannian version $\RSe$ of the
Reeds-Shepp metric $\RS$ here that only penalises velocities
orthogonal to. This model continuously converges to the standard
sub-Riemannian Reeds-Shepp metric  \cite{duits2018optimal} as the
relaxation parameter $\varepsilon$ yields $0$.

There is several examples of the roto-translational space and
Reeds-Shepp metric in medical images processing, such as in
\cite{Bekkers_2014} which applies filters on lifted images to help
track blood vessels in vascular imaging and in \cite{bekkers_2015}
for the computation of geodesics in $\SE(2)$. The following
definition of $\RSe$ is peculiar to the biomedical application.

\begin{definition}[Relaxed Reeds-Shepp metric]
  The relaxed Reeds-Shepp metric tensor field $g$
  \cite{duits2018optimal} can be written as a Euclidean norm at each
  point in $\rototrans_2$. Let $0 < \varepsilon \leq 1$ be the
  relaxation parameter, $\xi > 0$ a scaling parameter, consider
  $(x,\theta) \in \rototrans_2$ while $(\Dot{x}, \Dot{\theta}) \in
  \Tang_{(x,\theta)} \rototrans_2$ lies in the tangent plane, then:

  \begin{align*}
    g_{(x, \theta)} \left((\Dot{x}, \Dot{\theta}),(\Dot{x},
    \Dot{\theta})\right) &\eqdef |\Dot{x}\cdot e_{\theta}|^2 +
    \frac{1}{\varepsilon^2}|\Dot{x}\wedge e_{\theta}|^2 +
    \xi^2|\Dot{\theta}|^2 \\
    & = \left<R_{\theta} (\Dot{x}, \Dot{\theta}) ,
    \mathrm{diag}\left(1, \frac{1}{\varepsilon^2}, \xi^2 \right)
    R_{\theta} (\Dot{x}, \Dot{\theta}) \right>_{\SO(2)},
  \end{align*}

  where $e_{\theta}$ is the unit vector in the direction defined by
  the angle $\theta$, $R_\theta$ is the rotation
  matrix\footnote{$R_{\theta} \eqdef
    \begin{pmatrix}
      \cos \theta & -\sin \theta\\[6pt]
      \sin \theta & \cos \theta
    \end{pmatrix} \in \SO(2)$, and $R_\theta (\dot x, \dot \theta) = R_{\theta}
    \begin{pmatrix}\dot x\\ \dot \theta
  \end{pmatrix}$} and the bracket is the classical Euclidean inner product.
\end{definition}

The idea of this relaxation boils down to the increasing penalisation
of non-planarity of the lifted path as $\varepsilon$ nears $0$.
The relaxed version of Reeds-Shepp accurately approaches the
sub-Riemannian case with infinite cost for non-planar curves. The
$\abs{\Dot{\theta}}^2$ term penalises steering the prescribed
orientation of the curve, and acts as a penalisation of the local
curvature of the curve.
As we have defined a regularisation function embodied in the
Reeds-Shepp metric, the problem is further written down as the
optimisation of the following energy:

\begin{align}
  \argmin_{\sigma\in \mathcal{M}(\Upsilon)} T_{\beta,
  \varepsilon}(\sigma) = \argmin_{\sigma\in \mathcal{M}(\Upsilon)}
  \sum_{i=1}^T \norm{A_i e_{t_i \sharp} \sigma - b_{t_i} }_\hilb +
  \beta \int_{\Gamma} \int_0^1 \norm{\gamma' (t)}_{g} \ud t \ud \sigma(\gamma).
  \label{eq:unravelling-blasso}
\end{align}

Theoretical guarantees proven by the original authors still hold, see
appendix \ref{app:extreme} for a more rigorous proof that the paths
extreme points are conserved with this energy \eqref{eq:unravelling-blasso}.
Before illustrating our proposed energy on some numerical examples,
one can genuinely raise some concern on the theoretical soundness of
discretising the measure problem, as in practice the UFW presented in
section \ref{sec:numerics} recovers discretised curves, since the
computers cannot embed a computation with infinite precision. Do we
need an infinity of control points to correctly describe the solution
of the variational problem?
In the following, we try to fully address this issue by considering
multiple discretisations of curve and by proving $\Gamma$-convergence results.

\section{\texorpdfstring{$\Gamma$}{}-convergence of several
discretisation in the Benamou-Brenier case}

This section is motivated by numerical soundness, but also covers a
point raised by  \cite[Lemma 3.3]{Tovey2021}. Indeed, for

\[
  W : \gamma \mapsto \Tilde \gamma \in \argmin \left\{ w(\chi) \, |
  \, \chi \in \Gamma, 0 \leq i \leq T, \chi(t_i) = \gamma(t_i)  \right\}
\]

one has $E \left(W_\sharp \gamma \right) \leq E(\gamma)$, with
$w(\gamma) = \alpha + \beta \int_0^1 \norm{\gamma'(t)}_g^2 \ud t$.
This implies the existence of a theoretically optimal discretisation
of the space: any candidate solution to the optimisation problem can
be replaced by a better one, as long as it preserves the position of
the curve at the timestamps where the data term is evaluated. In
other words, the optimal curves are piecewise 'geodesic' with respect
to the weight function $w$ between each pair of timestamps $t_i$.

We now turn to discretising the space of curves to obtain a good
approximation of the solution to problem \eqref{eq:optim_sigma}, and
\emph{a fortiori} the energy in equation
\eqref{eq:unravelling-blasso} with the Reeds-Shepp metric. In this
section, we present a formulation for approximating absolutely
continuous curves using families of curves defined by a finite number
of control points. Note that these results are expressed for any $\xx
\subset \R^d$ threfore $\M(\Gamma)$, hence holds for submanifold
$\rototrans_d$ and associated $\M(\Upsilon)$. We then consider a
generic $\Gamma$ with its metric tensor $g$ (\eg\ Reeds-Shepp).

\subsection{A suitable definition for convergence}

We need to qualify the convergence of the surrogate problems to the
continuous one.
For this point, we will rely on the following notion of functional convergence.

\begin{definition}[$\Gamma$-convergence]
  Let $X$ a first countable space, $F_n : X \to \R$ a sequence of
  functionals. We say that $F_n$ sequence of functions
  \emph{$\Gamma$-converges} to its \emph{$\Gamma$-limit} $F: X \to
  \R$ if the two conditions hold:
  \begin{itemize}
    \item (Liminf inequality) for every sequences $x_n \in X$ such
      that $x_n \cvet x$:
      \begin{align*}
        F(x) \leq \liminf_{n\to\infty} F_n(x_n).
      \end{align*}
    \item (Limsup inequality) there exist a sequence $x_n \in X$ such
      that $x_n \cvet x$ and :
      \begin{align*}
        F(x) \geq \limsup_{n\to+\infty} F_n(x_n).
      \end{align*}
  \end{itemize}
\end{definition}

When defined, the limit is denoted $\Gamma\mathrm{-lim}_{n\to
+\infty}$ or more concisely $\Gamma\mathrm{-lim}_{n}$, hence with the
latter notations $F = \Gamma\mathrm{-lim}_{n} F_n$.
The interested reader can take a deeper dive in \cite{Maso2012,
Braides2002} to learn more. This approach is common in calculus of
variations, as it allows difficult—often non-convex—problems to be
approximated by simpler ones that are more tractable. Among other
properties, this functional convergence ensures \cite[Remark
2.11]{Braides2002} the \wast\ convergence of the minimisers (up to
subsequences) of the discretised problems $F_n$ towards the
minimisers of the continuous one $F$.

\subsection{Discretisation on the space of curves}

We will now consider how to discretise the space of curves in order
to find a good approximation of the solution of the problem
\eqref{eq:optim_sigma}. In this section, we provide a formulation for
the approximation of absolutely continuous curves using families of
curves defined by finite-dimensional control points.

Let $P^n$ be a map from the space of control points $\R^{d\times
k_n}$, for $k_n \in \N$ the number of control points, and $P_n =
P^n(\R^{d\times k_n})$ the $n$-th order discretisation of the space
of curves. Let $\mu\in \mathcal{M}(\R^{d\times k_n}),$ then we have
$P^n_{\sharp}\mu \in \mathcal{M}(\Gamma).$
The following result ensures that, with a few assumptions, that the
sequence of approximation spaces $P_n$ are good approximations to
find the minimisers of the energy in equation \eqref{eq:optim_sigma}.
Let $\chi_{P_n}(\gamma) =0$ if $\gamma \in P_n$ and  $+\infty$ otherwise.

\begin{theorem}
  Let $P_n\subset \Gamma$ be a sequence of subsets of $\Gamma$, such
  that $\bigcup_{n}P_n$ is dense in $\Gamma.$ For every $\gamma\in
  \Gamma$, let there be a sequence of measurable maps $S_n : \gamma
  \in \Gamma \mapsto \R^{d\times k_n}$ such that $P^n(S_n(\gamma))
  \longrightarrow \gamma$ for the uniform convergence. Suppose
  $w(P^n(S_n(\gamma)))\leq w(\gamma).$ Then the energy $E$ in
  equation \eqref{eq:optim_sigma} constrained to $P_n$
  $\Gamma$-converges to $E$, \ie\ $\Gamma\text{-}\mathrm{lim}_n(
  E+\chi_{P_n}) = E$.
\end{theorem}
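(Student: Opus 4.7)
The plan is to verify the liminf and limsup inequalities of $\Gamma$-convergence with respect to the weak-* topology on $\mathcal{M}(\Gamma)$. The liminf reduces to plain lower semicontinuity of $E$ on $\mathcal{M}(\Gamma)$, because $\chi_{P_n}(\sigma_n) \geq 0$ for every candidate sequence. For the limsup, the hypotheses on $P^n$ and $S_n$ point directly to the natural recovery sequence $\sigma_n \eqdef (P^n \circ S_n)_\sharp \sigma$, which is by construction supported in $P_n$ and, as I will argue, weak-* converges to $\sigma$ while every term in $E$ decreases or converges.

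\textbf{Liminf step.} Given $\sigma_n \cvet \sigma$, I would assume $\liminf (E+\chi_{P_n})(\sigma_n) < +\infty$; otherwise the inequality is trivial. The data-fidelity contribution $\sigma \mapsto \sum_i \|A_i e_{t_i \sharp}\sigma - b_{t_i}\|_\hilb$ is continuous under weak-* convergence, since $e_{t_i}$ is continuous from $\Gamma$ (uniform topology) to $\Omega$ and $A_i$ is integration against bounded continuous kernels $a_i^j$. For the transport-type penalty, I would invoke that $w(\gamma) = \alpha + \beta \int_0^1 \|\gamma'(t)\|_g^2\,dt$ is lower semicontinuous and bounded below on $\Gamma$; then standard Portmanteau arguments yield $\int w\,d\sigma \leq \liminf \int w\,d\sigma_n$. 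Combining the two controls proves $E(\sigma) \leq \liminf E(\sigma_n)$.

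\textbf{Limsup step.} With $\sigma_n = (P^n \circ S_n)_\sharp\sigma$, the measurability of $S_n$ and continuity of $P^n$ make $\sigma_n$ a well-defined Radon measure supported on $P_n$, so the constraint is automatically satisfied. The uniform convergence $P^n(S_n(\gamma)) \to \gamma$ for every $\gamma$, combined with dominated convergence applied to any bounded continuous test function on $\Gamma$, shows $\sigma_n \cvet \sigma$. The same pointwise argument at each timestamp $t_i$ gives $e_{t_i \sharp}\sigma_n \cvet e_{t_i \sharp}\sigma$, so the data-fidelity term converges. The regularizer obeys
\[
\int_\Gamma w\,d\sigma_n = \int_\Gamma w(P^n(S_n(\gamma)))\,d\sigma(\gamma) \leq \int_\Gamma w\,d\sigma
\]
by hypothesis, delivering $\limsup E(\sigma_n) \leq E(\sigma)$.

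\textbf{Main obstacle.} The most delicate ingredient is the lower semicontinuity of $\sigma \mapsto \int w\,d\sigma$ under weak-* convergence, which I expect to require two layers: first, curve-level LSC of $w$ under uniform convergence, essentially Fatou's lemma applied to the metric derivative of $\AC$-curves and convexity of $\|\cdot\|_g^2$; and second, a passage from curve-level to measure-level LSC, which, because $w$ is unbounded above, calls for an approximation of $w$ from below by continuous bounded functions $w_k \uparrow w$ and a supremum exchange. Secondary technical points include measurability of $P^n \circ S_n$ as a map from $\Gamma$ to itself (granted by the hypotheses) and the observation that the density of $\bigcup_n P_n$ in $\Gamma$ is already implicit in the existence of the $S_n$ maps, ensuring no mass is lost in the recovery sequence.
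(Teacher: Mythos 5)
Your proposal is correct and follows essentially the same route as the paper's proof: the liminf inequality is obtained from $E(\sigma_n)\leq (E+\chi_{P_n})(\sigma_n)$ together with lower semicontinuity of $E$, and the limsup inequality uses the recovery sequence $\sigma_n=(P^n\circ S_n)_{\sharp}\sigma$ combined with the hypothesis $w(P^n(S_n(\gamma)))\leq w(\gamma)$ and continuity of the data term. You merely spell out in more detail the weak-* convergence of the pushforwards and the two-layer lower semicontinuity argument that the paper leaves implicit.
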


\begin{proof}
  Consider a sequence $(\sigma_n)_{n \geq 0}$ such that $\sigma_n
  \cvet \sigma$ in $\radoncourbe$. Then, since

  \[
    E(\sigma_n) \leq E_n(\sigma_n),
  \]

  by lower semi-continuity, we conclude that:

  \[
    E(\sigma) \leq \liminf_{n\to+\infty} E_n(\sigma_n).
  \]

  For the $\limsup$ inequality, choosing $\sigma_n = (P^n \circ
  S_n)_{\sharp} \sigma \xrightharpoonup{*} \sigma$, the hypothesis yields

  \[
    \int_\Gamma w(\gamma) \, \mathrm{d}\sigma_n \leq \int_\Gamma
    w(\gamma) \, \mathrm{d}\sigma,
  \]

  and thus:

  \[
    \limsup_{n\to+\infty} \int_\Gamma w(\gamma) \, \mathrm{d}\sigma_n
    \leq \int_\Gamma w(\gamma) \, \mathrm{d}\sigma.
  \]

  By continuity, we finally yield:
  \[
    \limsup_{n\to+\infty} E_n(\sigma_n) \leq E(\sigma).
  \]
\end{proof}

\begin{remark}
  We may notice that the former proof defines an approximation map on
  $P_n.$ Let $S_n : \gamma \mapsto S_n(\gamma)$ be a measurable map,
  then $\gamma^n = P^n(S_n(\gamma)),$ and we retrieve $\sigma_n =
  \left(P^n \circ S_n \right)_{\sharp} \sigma.$ If we have $S_n\circ
  P^n = \mathrm{Id}$ as in the case of polygonal lines, it is even a projection.
\end{remark}

The latter result requires no more than very general hypotheses and
can be extended to more general settings. For instance, we may
consider the time-continuous setting \cite{Bredies2019a} where the
data term is changed to $\int_0^1 \|A e_{t\sharp} \sigma -
b_t\|^2_\hilb \ud t$, although we loose the application of the
representer theorem \cite{Boyer2018, Bredies2019b} that guarantees
the shape of the minimiser. Moreover, the only hypothesis on the
integrand $w(\gamma)$ relies on its lower semicontinuity, so we could
now consider many different regularisers, such as the Reeds-Shepp
metric used earlier.
On the contrary to \cite{Laville2024}, one do not have to exhibit a
discrete set of solutions to get the convergence, only the inequality
for the limsup property.
We can then detail some classic curve discretisation, fulfilling the hypothesis.

\begin{definition}[Curve discretisation]
  \label{def:discretisation}
  Let $c \in \Gamma^{k_n}$ be the control points of the discretised curve:
  \begin{itemize}
    \item Polygonal lines, $k_n$ \st\ $k_n | k_{n+1}$ ($k_n$ divides
      $k_{n+1}$) hence  $P_n \subset P_{n+1}$, $\tilde{t}_k = \frac{k}{k_n}$,

      \[
        P^n(c)(t) = \frac{\tilde{t}_{k+1} - t}{\tilde{t}_{k+1} -
        \tilde{t}_k} c_k + \frac{t - \tilde{t}_k}{\tilde{t}_{k+1} -
        \tilde{t}_k} c_{k+1}, \quad \text{if } t \in [\tilde{t}_k,
          \tilde{t}_{k+1}[
          \]

        \item Bézier curves, $P^n(c)(t) = \sum_{k=0}^n t^k
          (1-t)^{n-k}\binom{n}{k}c_k$,
        \item Piecewise geodesic\footnote{see Definitions
          \ref{def:geodesic} and  \ref{def:exp-map}.}, for $k_n |
          k_{n+1}$, there exists $0 \leq k \leq n$ such that $t\in
          \left[\frac{k}{n}, \frac{k+1}{n} \right]$

          \[
            P^n(c)(t) = \Exp_{c_k}\left( k_n \left( t-\frac{k}{k_n}
            \right) \Log_{c_k}(c_{k+1}) \right)
          \]
      \end{itemize}
    \end{definition}

    \begin{claim}
      The latter operators verify $w(P^n(S_n(\gamma))) \leq w(\gamma)$.
    \end{claim}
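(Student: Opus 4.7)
The plan is to decompose $w(\gamma) = \alpha + \beta \int_0^1 \|\gamma'(t)\|_g^2\,\ud t$ and note that the constant $\alpha$ is unaffected by any of the three discretisations; hence it suffices to establish, in each of the three cases, that the kinetic energy $E_{\mathrm c}(P^n(S_n(\gamma))) \leq E_{\mathrm c}(\gamma)$ for an appropriate choice of sampling map $S_n$. In all three cases I would take $S_n(\gamma) = (\gamma(k/k_n))_{k}$ (respectively $(\gamma(k/n))_k$ for Bézier), so that the control points lie on the original curve.

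I would start with the \emph{piecewise geodesic} case, which is the cleanest and motivates the other two. On every subinterval $I_k = [k/k_n,(k+1)/k_n]$, the curve $P^n(S_n(\gamma))|_{I_k}$ is, by construction of the exponential map, the constant-speed geodesic joining $\gamma(k/k_n)$ to $\gamma((k+1)/k_n)$. By \Cref{def:geodesic}, such a geodesic minimises $\tfrac12\int_{I_k}\|\chi'(t)\|_g^2\,\ud t$ among all absolutely continuous curves $\chi$ with the same two endpoints on $I_k$; in particular, it does no worse than $\gamma|_{I_k}$ itself. Summing over $k$ yields the desired inequality.

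The \emph{polygonal} case is then essentially the Euclidean specialisation of the previous one, since affine segments are the geodesics of $\R^d$ for the flat metric. A direct verification also works: writing $c_{k+1}-c_k = \int_{\tilde t_k}^{\tilde t_{k+1}}\gamma'(s)\,\ud s$ and applying Cauchy--Schwarz gives $\|c_{k+1}-c_k\|^2 \leq (1/k_n)\int_{\tilde t_k}^{\tilde t_{k+1}}\|\gamma'(s)\|^2\,\ud s$, and since the piecewise affine interpolant has constant speed $k_n\|c_{k+1}-c_k\|$ on $I_k$ its contribution to the kinetic energy equals $k_n\|c_{k+1}-c_k\|^2$, which summing gives at most $\int_0^1\|\gamma'(s)\|^2\,\ud s$.

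The \emph{Bézier} case will be the main obstacle, because the curve does not pass through its interior control points and its derivative mixes all of them. The key identity is
\begin{equation*}
P^n(c)'(t) = n\sum_{k=0}^{n-1} B_{k,n-1}(t)\,(c_{k+1}-c_k), \qquad B_{k,n-1}(t) = \binom{n-1}{k}t^k(1-t)^{n-1-k},
\end{equation*}
where the Bernstein polynomials $B_{k,n-1}$ sum to $1$. Applying Jensen's inequality to the convex function $\|\cdot\|^2$ gives $\|P^n(c)'(t)\|^2 \leq n^2 \sum_k B_{k,n-1}(t)\,\|c_{k+1}-c_k\|^2$, and integrating together with the classical identity $\int_0^1 B_{k,n-1}(t)\,\ud t = 1/n$ yields $\int_0^1\|P^n(c)'(t)\|^2\,\ud t \leq n\sum_{k=0}^{n-1}\|c_{k+1}-c_k\|^2$. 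Finally, the Cauchy--Schwarz step from the polygonal case applied on each interval $[k/n,(k+1)/n]$ bounds $n\|c_{k+1}-c_k\|^2$ by $\int_{k/n}^{(k+1)/n}\|\gamma'(s)\|^2\,\ud s$, and summation concludes. The strict Riemannian analogue of this Bézier argument is delicate (the Jensen step needs a geodesically convex target and a Cartan--Hadamard-type assumption on $\Gamma$), so I would state the Bézier case in the ambient Euclidean metric on $\xx\subset\R^d$ and invoke the geodesic case for the intrinsic $\rototrans_d$ setting.
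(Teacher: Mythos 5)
Your proof is correct and, for the polygonal and Bézier cases, follows essentially the same computation as the paper: write the derivative of the interpolant as a (Bernstein-)weighted sum of increments, express each increment as $\int\gamma'(s)\,\ud s$, and apply Jensen/Cauchy--Schwarz twice. For the piecewise geodesic case you argue differently: instead of the paper's explicit computation via the Log map, the identity $\|\Log_{p}(q)\|_g = \dg(p,q)$, and a length--energy (Jensen) bound, you invoke directly the variational characterisation of geodesics from Definition~\ref{def:geodesic} --- the constant-speed geodesic minimises $\int_{I_k}\|\chi'\|_g^2$ among curves on $I_k$ with the same endpoints, hence beats $\gamma|_{I_k}$. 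This is shorter and makes the mechanism transparent (the interpolant is energy-optimal on each subinterval), at the cost of relying on the minimising property of the $\Exp$/$\Log$ construction rather than exhibiting the bound by hand; both arguments need $\Log$ to return the initial velocity of a minimising geodesic, so they rest on the same implicit assumption. Your closing caveat on the Bézier case is a genuinely useful addition: the Jensen step $\|\sum_k w_k v_k\|_g^2\le\sum_k w_k\|v_k\|_g^2$ is applied with $g$ anchored at the evaluation point of the Bézier curve, while the final integrand $\|\gamma'(s)\|_g^2$ anchors $g$ at $\gamma(s)$; the paper's proof silently identifies these, which is only licit for a base-point-independent (Euclidean/ambient) metric. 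Restricting the Bézier estimate to the ambient norm, as you propose, is the honest reading, and it is consistent with the paper's own admission elsewhere that its Bézier curves are defined in the ambient space rather than intrinsically on $\rototrans_d$.
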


    \begin{proof}
      For the sake of clarity, we exhibit the inequality for one
      curve $\gamma$, meaning $\sigma = \delta_\gamma$; the
      generalisation to multi-curve $\sigma = \sum_i \alpha_i
      \delta_{\gamma_i}$ is straightforward by the triangle
      inequality of the metric norm.
      Let us check that the three classical discretisations fulfill
      the property:

      \begin{itemize}
        \item Polygonal curves case: taking $(S_n(\gamma))_k =
          \gamma(\Tilde{t}_k),$
          \begin{align*}
            & \int_0^1\left\|P^n(S_n(\gamma))'(t)\right\|_g^2\ud t \\
            &= \int_0^1 \left\|\sum_{k=0}^{k_n-1}
            \mathbbm{1}_{[\Tilde{t}_k, \Tilde{t}_{k+1}]}(t)
            \left[\gamma(\Tilde{t}_{k+1})-\gamma(\Tilde{t}_k)
            \right]\right\|_g^2/(\Tilde{t}_{k+1}- \Tilde{t}_{k})^2 \ud t\\
            & = \sum_{k=0}^{k_n-1} (\Tilde{t}_{k+1}-
            \Tilde{t}_{k})\left\|\int^{\Tilde{t}_{k+1}}_{\Tilde{t}_{k}}
            \gamma'(s)\ud s \right\|_g^2 / (\Tilde{t}_{k+1}- \Tilde{t}_{k})^2 \\
            & \leq \sum_{k=0}^{k_n-1}
            \int^{\Tilde{t}_{k+1}}_{\Tilde{t}_{k}}
            \left\|\gamma'(s)\right\|_g^2 \ud s\\
            & = \int^1_0\left\|\gamma'(s)\right\|_g^2 \ud s.
          \end{align*}
        \item Bézier curves: taking $S_n(\gamma) \in \R^{d\times
          (n+1)}$, then $ (S_n(\gamma))_k = \gamma(k/n),$
          \begin{align*}
            &\int_0^1\left\|P^m(S_n(\gamma))'(t)\right\|_g^2\ud t \\
            & =
            \int_0^1\left\|\sum^{m-1}_0\binom{m-1}{k}t^k(1-t)^{m-1-k}(\frac{\gamma((k+1)/m)
            - \gamma(k/m)}{1/m})\right\|_g^2\ud t\\
            & \leq_\text{Jensen} \sum_{k=0}^{m-1}\binom{m-1}{k}
            \int_0^1 t^k(1-t)^{m-1-k}\left\|\frac{\gamma((k+1)/m) -
            \gamma(k/m)}{1/m}\right\|_g^2\ud t\\
            & = \sum_{k=0}^{m-1}\binom{m-1}{k} \int_0^1
            t^k(1-t)^{m-1-k}\left\|\frac{\int_{k/m}^{(k+1)/m}\gamma'(s)\ud
            s}{1/m}\right\|_g^2\ud t\\
            &\leq_\text{Jensen} \sum_{k=0}^{m-1}\binom{m-1}{k}
            \int_0^1 t^k(1-t)^{m-1-k}\left(
              \frac{\int_{k/m}^{(k+1)/m}\left\|\gamma'(s)\right\|_g^2\ud
            s}{1/m} \right)\ud t\\
            & = \sum_{k=0}^{m-1} (1/m) \,
            \frac{\int_{k/m}^{(k+1)/m}\left\|\gamma'(s)\right\|_g^2\ud
            s}{1/m} \\
            &= \int^1_0\left\|\gamma'(s)\right\|_g^2 \ud s.
          \end{align*}
          \\
        \item Piecewise geodesic curves, $\dg$ the geodesic distance
          defined in equation \eqref{eq:geodesic-distance}:
          \begin{align*}
            & \int_0^1\left\|P^n(S_n(\gamma))'(t)\right\|^2\ud t \\
            &= \int_0^1 \left\|\sum_{k=0}^{k_n-1}
            \mathbbm{1}_{[\Tilde{t}_k,
            \Tilde{t}_{k+1}]}(t)\Log_{\gamma(\Tilde{t}_k)}
            \left(\gamma(\Tilde{t}_{k+1}) \right)
            \right\|_g^2/(\Tilde{t}_{k+1} - \Tilde{t}_{k})^2 \ud t\\
            & = \sum_{k=0}^{k_n-1} (\Tilde{t}_{k+1} -
            \Tilde{t}_{k})\left\|\Log_{\gamma(\Tilde{t}_k)}
            \left(\gamma(\Tilde{t}_{k+1})\right) \right\|_g^2 /
            (\Tilde{t}_{k+1} - \Tilde{t}_{k})^2 \\
            & = \sum_{k=0}^{k_n-1} \dg(\gamma(\Tilde{t}_k),
            \gamma(\Tilde{t}_{k+1}))^2 / (\Tilde{t}_{k+1} - \Tilde{t}_{k}) \\
            & \leq \sum_{k=0}^{k_n-1} \left(
              \int^{\Tilde{t}_{k+1}}_{\Tilde{t}_k} \norm{\gamma'(s)}_g
            \ud s \right)^2 / (\Tilde{t}_{k+1} - \Tilde{t}_{k}) \\
            & \leq_\text{Jensen} \sum_{k=0}^{k_n-1}
            \int^{\Tilde{t}_{k+1}}_{\Tilde{t}_k}
            \left\|\gamma'(s)\right\|_g^2 \ud s\\
            & = \int^1_0\left\|\gamma'(s)\right\|_g^2 \ud s.
          \end{align*}
      \end{itemize}
    \end{proof}

    \begin{remark}
      As we stated earlier, our result is related to Lemma 3.3 and
      3.4 in \cite{Tovey2021}, recalling that the minimiser
      $\sigma^*$ should be supported on the minimisers of $w$
      interpolating points prescribed at the timestamps appearing in
      \eqref{eq:optim_sigma}. The corresponding discretisations are
      the \emph{polygonal lines} for the Euclidean metric and more
      generally the \emph{piecewise geodesic curves} in the case
      where $w(\gamma) = \alpha + \beta\int_0^1
      \norm{{\gamma'}}(t)^2\ud t.$ However, it is still worth looking
      for other discretisations as it may be numerically helpful
      during the optimisation, such as a Bézier formulated with
      control points lying on the manifold..
    \end{remark}

\section{An algorithm and numerical illustrations of path untangling}

\label{sec:numerics}

\subsection{An Unravelling Frank-Wolfe algorithm}

Now that the lift has been leveraged to define an energy suited for
the untangling problem, one need to numerically implement the
reconstruction\footnote{See the associated repositories in
  \url{https://gitlab.inria.fr/blaville/dynamic-off-the-grid} and
\url{https://github.com/TheoBertrand-Dauphine/dynamic-off-the-grid/}}.
This is clearly non-trivial, as one needs to compute the minimisation
of a function on a infinite dimensional space, lacking the Hilbert
structure used in the proximal algorithms.
In the off-the-grid community, there are numerous algorithms
\cite{Castro2015,Chizat2019,Bredies2012,Denoyelle2018} to tackle the
numerical implementation of these off-the-grid methods, despite the
complexity of the task. The Frank-Wolfe algorithm \cite{Frank1956} is
one of them, this greedy algorithm works as a linearisation of the
objective function by its gradient, hence requiring only directional
derivatives to perform the optimisation.
It was refined for gridless methods in
\cite{Bredies2012,Denoyelle2018}, reaching the \emph{Sliding
Frank-Wolfe} algorithm which enjoys nice theoretical guarantees and
good   reconstruction in practice. The dynamic off-the-grid
implementation has been achieved in \cite{Bredies2022} and pursued in
\cite{Tovey2021} with some stochastic and dynamical programming improvements.

In this section, we introduce the \emph{Unravelling Frank-Wolfe}
algorithm, denoted UFW. It relies not only on the discretisations
quoted before, but it also exploits the Riemannian structure enjoyed
by the space $\rototrans_d$. As stated before, Riemannian
optimisation acts as a preconditioning of the gradient descent, hence
helping to avoid local minima and reducing the number of iterations.
Important Riemannian quantities for this implementation related to
the Reeds-Shepp metric are calculated in Appendix
\ref{app:reeds-shepp}. The Riemannian gradient descent exploited in
steps 7 and 8 is developed in Appendix \ref{app:riemannian-optim}.
The algorithm \ref{algo:ufw} shows the pseudocode for the UFW algorithm.

Since the Frank-Wolfe algorithm operates on a weakly-* compact set,
we will in practice reduce the set of curves to $C = \left\{ \gamma
\in \Upsilon, {w(\gamma)} \leq \norm{y}_\hilb / (2\lambda) \right\}$.

\begin{algorithm}[ht!]
  \SetAlgoLined
  \KwData{Acquisition $t \mapsto b_t \in \hilb$, number of iterations
    $K$, regularisation weight $\beta > 0$ and metric parameters $0
  \leq \varepsilon \leq 1$, $\xi > 0$.}

  Initialisation: $m^{[0]} = 0$, $N^{[0]} = 0$.

  \For{$k$, $0 \leq k \leq K$}{
    With $m^{[k]} = \sum_{i=0}^{N^{[k]}} a_i^{[k]}
    \delta_{{\curve_i}^{[k]}} $ such that $a_i^{[k]} \in \R$, $
    {\curve_i}^{[k]} \in C$, let
    \begin{align*}
      \eta^{[k]}(x) \eqdef \dfrac{1}{\alpha} \Phicurve^\ast
      (\Phicurve m^{[k]} - y).
    \end{align*}

    Find $\curve^\ast \in \Upsilon$ such that:
    \begin{align*}
      \gamma^\ast \in \argmin_{\gamma \in C}
      \crogamma{\eta^{[k]}}{\delta_\gamma}
    \end{align*} \label{algo:csfw-oracle}
    \eIf{$\norm{\eta^{[k]} \left(\gamma^\ast\right)}_\infty \leq 1$} {
    $m^{[k]}$ is a solution. \Return $m^{[k]}$.} {
      \label{algo:csfw-convex-step} Compute $m^{[k + 1/2]} =
      \sum_{i=0}^{N^{[k]}} a_i^{[k + 1/2]} \delta_{{\curve_i}^{[k]}}
      + a^{[k + 1/2]}_{N^{[k]}+1} \delta_{\gamma^*} $ such that:
      \begin{align*}
        a_i^{[k + 1/2]} \in & \argmin_{a \in \R^{N^{[k]}+1}}
        T_{\beta, \varepsilon} \left( \sum_{i=0}^{N^{[k]}} a_i
          \delta_{{\curve_i}^{[k]}} + a_{N^{[k]}+1} \delta_{\curve^*}
        \right) \quad
      \end{align*}

      \label{algo:csfw-non-convex-step} Compute $m^{[k+1]} =
      \sum_{i=0}^{N^{[k]} + 1} a_i^{[k+1]}
      \delta_{{\curve_i}^{[k+1]}}$, output of the optimisation
      initialised with $m^{[k + 1/2]}$:
      \begin{align*}
        \left(a^{[k+1]}, \curve^{[k+1]} \right) \in
        \argmin_{(a,\curve) \in \R^{N^{[k]+1}} \times
        \Upsilon^{N^{[k]+1}} } T_{\beta, \varepsilon} \left(
        \sum_{i=0}^{N^{[k]}+1} a_i \delta_{{\curve_i}}   \right).
      \end{align*}

      Prune the low amplitude atoms, set $N^{[k]}$ to the number of
      remaining atoms.
    }
  }
  \KwResult{Discrete measure $m^{[k]}$ where $k$ is the stopping iteration.}

  \caption{Unravelling Frank-Wolfe.}
  \label{algo:ufw}
\end{algorithm}

The optimisation in step 4 is the support estimation of the new
curve. The numerical certificate $\eta^{[k]}$ is defined thanks to
the extremality conditions \cite{Duval2014, Denoyelle2018} that
relate the optimal measure $\delta_\gamma$ to a continuous function
$\eta$. The numerical certificate $\eta^{[k]}$ amounts in practice to
the residual observation, after removing the contributions of the $k$
already estimated curves. A convenient initialisation is the one
minimising the square norm between each frame of $A_i e_{t_i \sharp}
\delta_\gamma$ and $ \eta^{[k]}$.
We also tested a multistart strategy for this step, applying the idea
of \cite{Bredies2022}.

Riemannian gradient descents in steps 7 and 8 are performed using
PyTorch and its automatic‐differentiation framework. To solve the
geodesic equation \eqref{eq:geodesic} for defining piecewise geodesic
curves and also to perform a Riemannian gradient descent, we rely on
numerical integration. In particular, we use the \texttt{torchdiffeq}
package\footnote{\url{https://github.com/rtqichen/torchdiffeq}},
which implements multiple integration schemes with autograd support,
such as the Dormand–Prince method, an adaptive‐step fifth‐order
Runge–Kutta method. Christoffel symbols $\Gamma^k_{ij}$ involved in
the geodesic equation are computed in Appendix \ref{app:reeds-shepp}.

\begin{remark}
  Obviously, piecewise geodesic discretisation implies constraint not
  only on $x \in \rototrans$ points of the geodesic but also for its
  direction/covariant derivative $v \in \Tang_x \rototrans$ at each
  discretisation point, by definition of the Exponential map. Hence,
  the optimisation must be performed on these $(x,v)$ which calls for
  some adaptation with a metric compatible with the tangent bundle,
  the \emph{Sasaki metric}. Its definition and the modified energy
  are detailed in Appendix \ref{app:riemannian-optim}.
\end{remark}

In the Euclidean setting, we perform gradient descent using the Adam
optimizer with a learning rate of \(10^{-2}\). Bézier curves are
generated via a naive implementation of de Casteljau’s algorithm
since our goal is not solely on performance, but since the timesteps
are fixed, the generation could be condensed in a pseudo-Vandermonde
matrix product, more lightweight and easier to compute than the plain
recursive algorithm.

Now, we can apply this UFW  algorithm to simulated data, and verify
that it is able to retrieve crossing curves from the blurred and
noisy observation, as shown in Figure \ref{fig:first_phantom}.

\begin{figure}[ht!]
  \centering
  \begin{subfigure}[b]{0.3\textwidth}
    \centering
    \includegraphics[width=\textwidth]{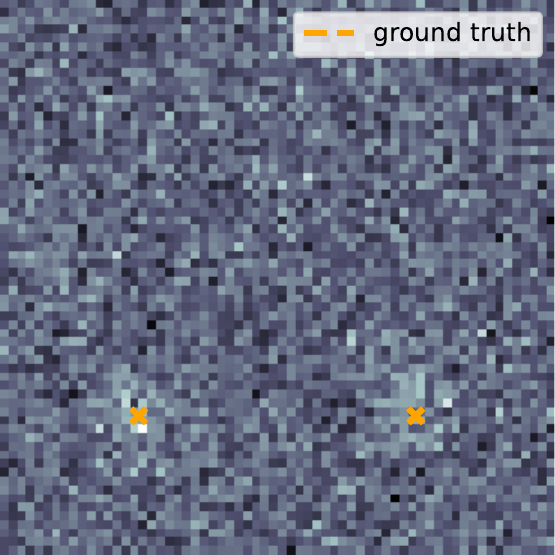}
  \end{subfigure}
  \hfill
  \begin{subfigure}[b]{0.3\textwidth}
    \centering
    \includegraphics[width=\textwidth]{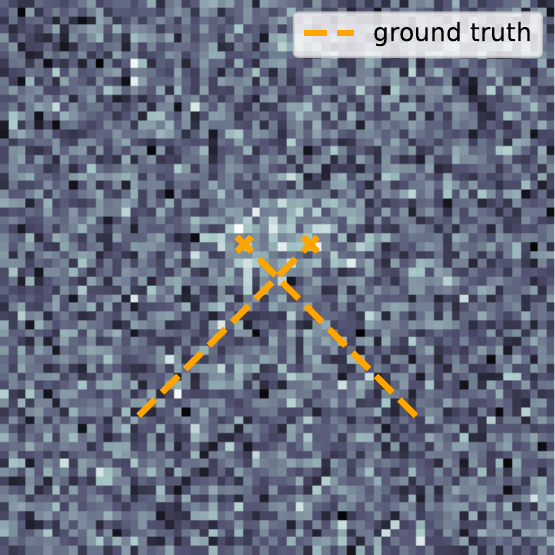}
  \end{subfigure}
  \hfill
  \begin{subfigure}[b]{0.3\textwidth}
    \centering
    \includegraphics[width=\textwidth]{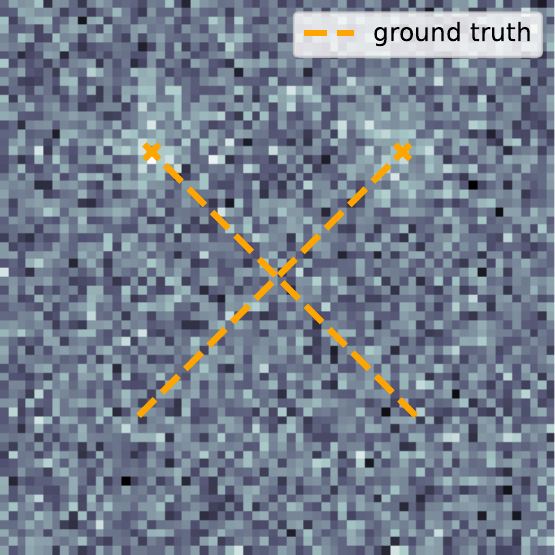}
  \end{subfigure}
  \caption{First phantom for $T=21$ frames and two spikes crossing
    themselves, 60\% of white Gaussian noise altering each frame (here
  at first, half of stack and last frame).}
  \label{fig:first_phantom}
\end{figure}

\subsection{Balanced optimal transport}

We consider two phantoms from \cite{Bredies2019a}, the former
consists in two point measures crossing, while the latter is a bit
more tedious with three Dirac measures moving nearby. The data term
is defined as a set of linear forms chosen as the integral of test
functions against the temporal measure $\rho_t$ at time $t.$ This
formulation covers the cases defined in \cite{Bredies2019b} where
Fourier coefficients are taken. We chose a classic test function for
these synthetic applications by setting the test functions as a
Gaussian kernel centred on vertices of a $N\times N$ grid covering
$[-1,1]\times [-1,1]$, defined likewise \cite{Denoyelle2018}. From an
application perspective, this may be seen as a discrete convolution
of the measure with the \emph{Point Spread Function} of the imaging system.
The Gaussian spread is set to $\sigma = 0.01$ on a $64 \times 64$ domain.

The first phantom is generated with $T=21$ time samples, and will be
tested on the worst-case scenario, namely 60\
noise altering each image: see Figure \ref{fig:first_phantom}. The
regularisation parameter $\beta$ (and so forth $\varepsilon$ and
$\xi$) has to be tuned properly.

We present in the figure \ref{fig:Euclidean-vs-RS} how different
choices reach opposite output, hence the proper choice of these
parameters is the crux for the untangling of curves.

\begin{figure}[ht!]
  \centering
  \begin{subfigure}{.5\textwidth}
    \centering
    \includegraphics[width=.835\linewidth]{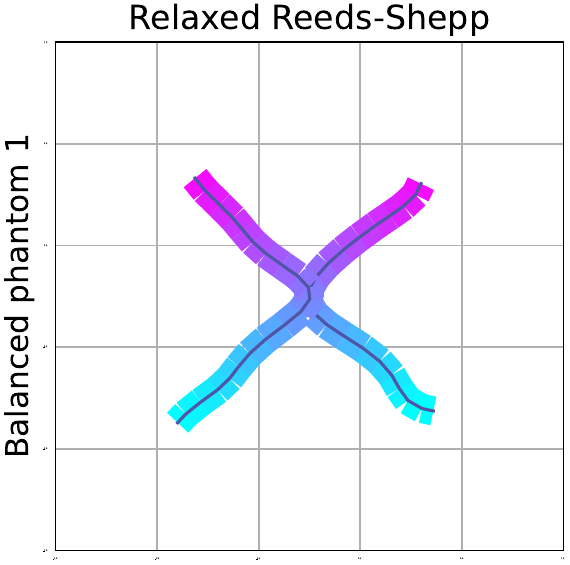}
  \end{subfigure}%
  \begin{subfigure}{.5\textwidth}
    \centering
    \includegraphics[width=.95\linewidth]{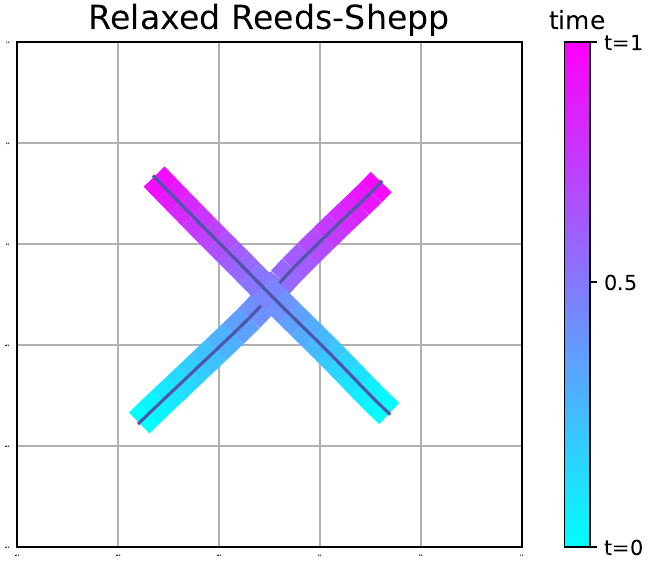}
  \end{subfigure}
  \caption{Reconstruction of the first phantom of \cite{Bredies2019a,
    Tovey2021} consisting of a stack of 21 images altered by 60\% of
    Gaussian noise. Each curve is a Bézier curve parametrised by control
    points living in $\rototrans_2$. Left: crossing is not recovered,
    with  $\beta = \num{e-3}$, $\varepsilon = \num{0.5}$ and $\xi = 100$.
    Right: untangling of curves, parameters are tuned to $\beta =
  \num{e-3}$, $\varepsilon = \num{0.05}$ and $\xi = 1$.}
  \label{fig:Euclidean-vs-RS}
\end{figure}

In layman terms, we could give the following interpretation:

\begin{itemize}
  \item $\beta$ controls the regularisation of our Benamou-Brenier
    flavoured regularisation,
  \item $\varepsilon$ enforces the planarity of the curve,
  \item $\xi$ penalises the local curvature.
\end{itemize}

Our method is then compared with state-of-the-art results from
\cite{Bredies2019a, Tovey2021} in figure \ref{fig:RS-vs-the-world}.
Our algorithm successfully untangles the trajectory, even with the
high level of noise. The computation time is approximatively 6
seconds on a Xenon CPU E5-2687W for polygonal, 50 seconds for Bézier
and up to 30 minutes for piecise geodesic.

\begin{figure}[ht!]
  \centering
  \begin{subfigure}{.5\textwidth}
    \centering
    \includegraphics[width=.84\linewidth]{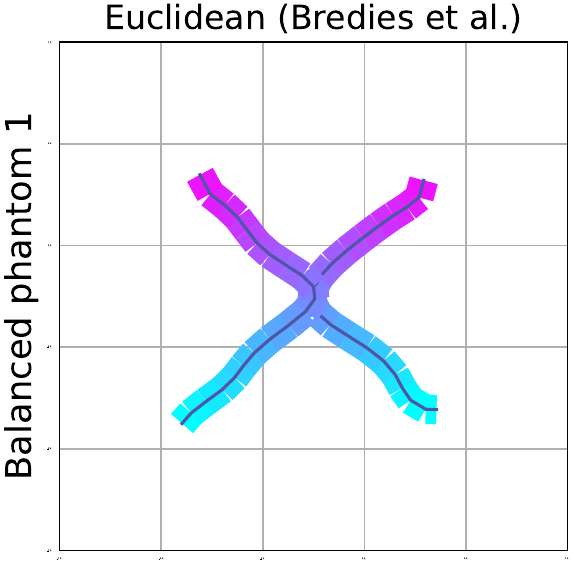}
  \end{subfigure}%
  \begin{subfigure}{.5\textwidth}
    \centering
    \includegraphics[width=.95\linewidth]{fig/2_dirac_crossing_RS.pdf}
  \end{subfigure}%
  \caption{Reconstruction of the first phantom of \cite{Bredies2019a,
    Tovey2021} consisting of a stack of 21 images altered by 60\% of
    Gaussian noise. Left: result from \cite{Bredies2019a}, quite similar
    to \cite{Tovey2021} up to the speed of convergence, the crossing
    cannot be recovered. Right: untangling of curves with our method,
    parameters are tuned to $\beta = \num{e-5}$, $\varepsilon =
  \num{0.05}$ and $\xi = 1$.}
  \label{fig:RS-vs-the-world}
\end{figure}

The choice of discretisation significantly affects the resulting
curves. Bézier curve discretisation facilitates the identification of
crossing curves, whereas polygonal discretisations may not. Bézier
curves are generally preferred as they are smooth and  their
non-locality allows for a good expressivity while they are
lightweight to compute. Otherwise, piecewise geodesic discretisation
bears a higher computation load (by to 2 orders of magnitude) as one
needs to solve at each iteration the geodesic equation
\eqref{eq:geodesic}, with the Christoffel symbols in Appendix
\ref{app:reeds-shepp}, but also yields good results often similar to
Bézier ones. In the following, we present only the results on Bézier
curves as it shows the best performance, but the reader should feel
free to examine the repositories for more information.

The second phantom now consists of $T=51$ slices, where one needs to
recover three paths. The results are plotted in figure
\ref{fig:RS-vs-the-world-bis}. The computation time is approximately
31 seconds on the Xenon, 2 minutes for Bézier and up to one hour for
piecewise geodesic.

\begin{figure}[ht!]
  \centering
  \begin{subfigure}{.5\textwidth}
    \centering
    \includegraphics[width=.84\linewidth]{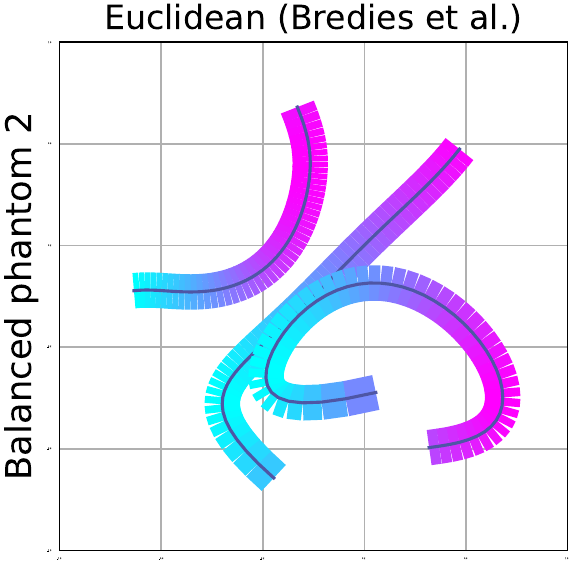}
  \end{subfigure}%
  \begin{subfigure}{.5\textwidth}
    \centering
    \includegraphics[width=.95\linewidth]{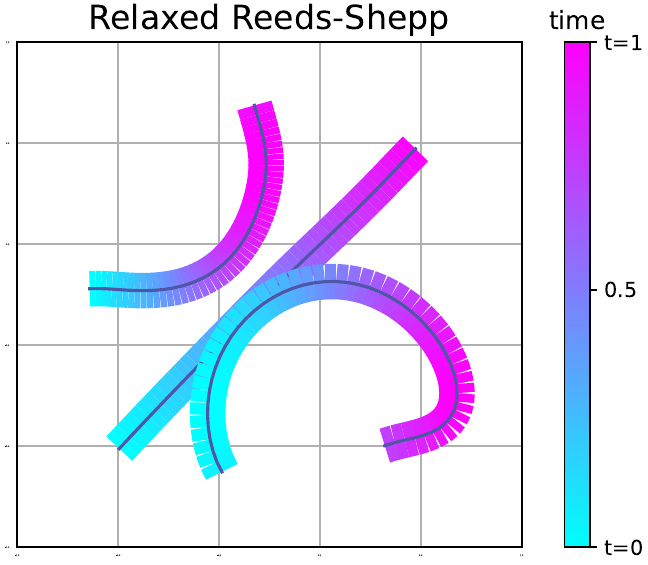}
  \end{subfigure}%
  \caption{Reconstruction of the second phantom of \cite{Bredies2019a,
    Tovey2021} consisting of a stack of 51 images altered by 60\% of
    Gaussian noise. The ground-truth spikes are not blending in each
    other, but rather take the same path at different times. Left: result
    from \cite{Bredies2019a}, quite similar to \cite{Tovey2021} up to the
    speed of convergence. Right: Reeds-Shepp regularisation, parameters
  are tuned to $\beta = \num{e-3}$, $\varepsilon = \num{0.05}$ and $\xi = 1$.}
  \label{fig:RS-vs-the-world-bis}
\end{figure}

\subsection{Unbalanced optimal transport}

Some further developments in the literature \cite{Tovey2021}, among
other improvements such as the computation time, proposed to tackle
the unbalanced optimal transport case: this encompasses any moving
Dirac with varying amplitude over time. This addition is a rather
important feature in the biomedical applications, since the molecules
of interest can lose brightness over time. Since the mass may change,
the regulariser is no longer the Benamou-Brenier energy but the quite
similar Wasserstein-Fisher-Rao one, as proposed in \cite{Bredies2019,
Bredies2019a} from the seminal work of \cite{Chizat2015,
Kondratyev2015, Liero2015}. Hence, the optimisation performed
consists of an \emph{unbalanced} optimal transport.

We propose adding up this idea to our algorithm to handle more
general problems. In practice, the amplitude information is
implemented in a new dimension added to the control points, the
estimated amplitude will thereby be a Bézier curve. For instance, in
the case $d=2$, the control points bear 4 dimensions, the first ones
for spatial dimension $(x_1, x_2)$, the third amounting to the
orientation $\theta$ (enabling the curve crossing) and the fourth
covering the amplitude $a$ (evolution of the mass over time) values
of the curve.

We noticed in our experiments that the algorithm tends to share
unfairly the global mass over the estimated paths: there is a
\emph{'winner takes it all'} effect, where the algorithm lean in
favour of the first estimated curve (overestimating it) while giving
less mass to the second curve (underestimating it). As for the
original article \cite{Tovey2021}, it did not enforce any prior on
the amplitude part and boils down to a least square norm fitting. We
propose to add a small regularisation with parameter $\zeta \in \R^+$:

\begin{align*}
  \mbox{if } \curve = \left(x_1(t), x_2(t), \theta(t), a(t) \right),
  \quad R(\curve) = \zeta \norm{a}_2^2.
\end{align*}

In the practical setting, this regulariser is added to the energy and
the energy is optimised accordingly. The case $\zeta = 0$ is
precisely the classical Wasserstein-Fisher-Rao unbalanced optimal
transport from \cite{Tovey2021}. In the following, we present in
figure \ref{fig:unbalanced-RS-vs-the-world} the reconstruction
performed by our method, against the reconstruction in the
literature, which lacks by design the curve crossing. Our method
successfully captures the dynamic of both paths and amplitudes
components, and this latter observation generalises even with noise.
Note that in the bottom left corner, our algorithm struggles to
recover the 'loose ends' namely the position where the amplitude of
the moving Dirac measure vanishes: the algorithms may handle paths
discontinuities by zeroing out the amplitude, but a near-zero
amplitude on the path endpoint is hard to capture since there is no
connectivity prior to exploit.

\begin{figure}[ht!]
  \centering
  \begin{subfigure}{.5\textwidth}
    \centering
    \includegraphics[width=.84\linewidth]{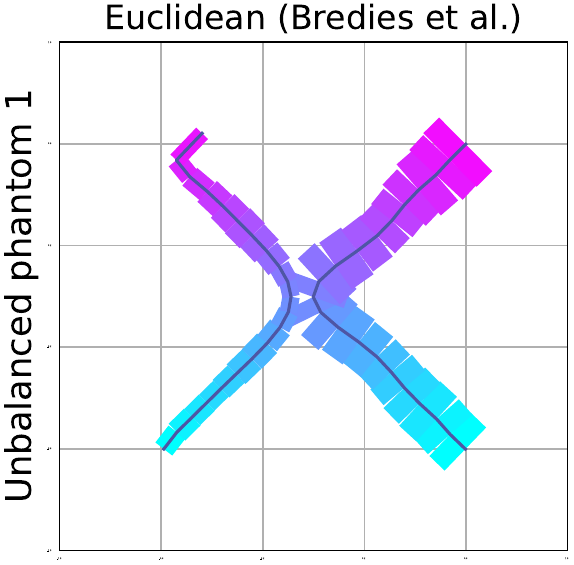}
  \end{subfigure}%
  \begin{subfigure}{.5\textwidth}
    \centering
    \includegraphics[width=.95\linewidth]{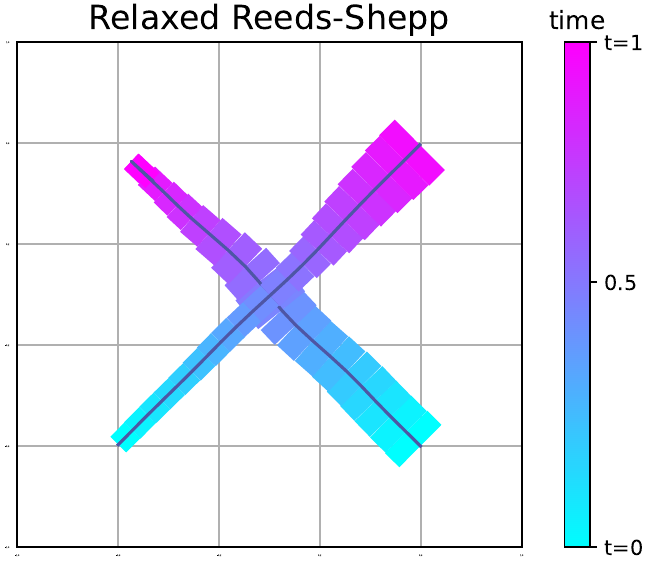}
  \end{subfigure}%
  \caption{Reconstruction of the first phantom of \cite{Bredies2019a,
    Tovey2021} consisting in a stack of 21 images with varying masses on
    the paths and 10 \% noise. Left: result for $\beta = \num{e-5}$
    without any lift, the crossing cannot be recovered. Right: untangling
    of curves with our method, parameters are tuned to $\beta =
  \num{6e-4}$, $\varepsilon = \num{0.05}$, $\xi = 1$ and $\zeta = 0.3 \beta$. }
  \label{fig:unbalanced-RS-vs-the-world}
\end{figure}

\begin{figure}[ht!]
  \centering
  \begin{subfigure}{.5\textwidth}
    \centering
    \includegraphics[width=.84\linewidth]{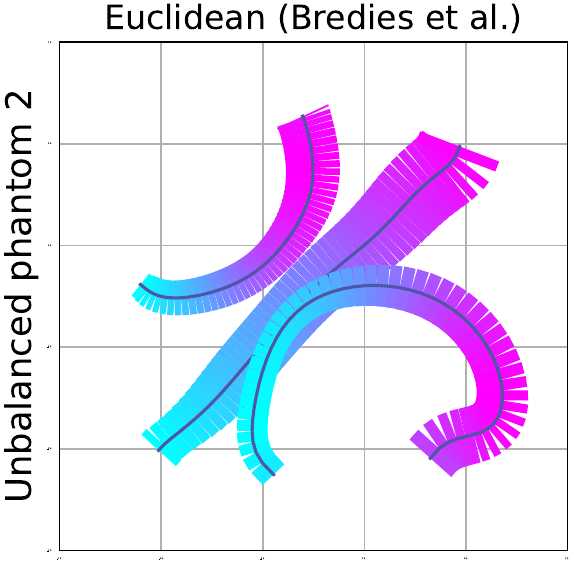}
  \end{subfigure}%
  \begin{subfigure}{.5\textwidth}
    \centering
    \includegraphics[width=.95\linewidth]{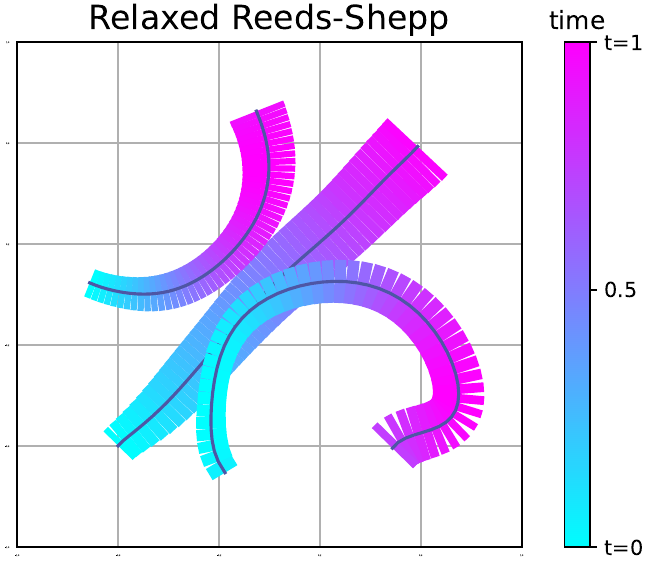}
  \end{subfigure}%
  \caption{Reconstruction of the second phantom of \cite{Bredies2019a,
    Tovey2021}, unbalanced case with 60 \% of Gaussian noise. Left:
    result for $\beta = \num{e-5}$. Right: Reeds-Shepp relaxation,
    parameters are tuned to $\beta = \num{6e-4}$, $\varepsilon =
  \num{0.05}$, $\xi = 1$ and $\zeta = 0.3 \beta$. }
  \label{fig:unbalanced-3-spikes-RS-vs-the-world}
\end{figure}

To sum this up, the Unravelling Frank-Wolfe algorithm is able to
recover crossing paths with various discretisations and possibly
varying masses over time. Bézier curves perform reasonably well
compared to piecewise geodesic discretisation, while being
lightweight to compute. In future work, it may be interesting to
consider Riemannian  Bézier curves compatible with the manifold
structure through a definition as the Fréchet mean on the control
points on the manifold rather than its current definition, which
still relies on the ambient space.

\section{Conclusion and outlook}

We successfully proposed a method to enable the reconstruction and
untangling of crossing moving point sources, through the use of a
lift with the roto-translational space $\rototrans_d$ and the
introduction of a proper metric to untangle the crossing path while
controlling the path curvature. This metric, inspired by biomedical
applications, has been used as a regularisation term. Then, we proved
$\Gamma$-convergence results for discretisation of our proposed
energy but not limited to, this may come handy for other off-the-grid
curve reconstruction.
Eventually, an Unravelling Frank-Wolfe algorithm has been introduced
to successfully tackle the augmented problem, through the leveraging
of Riemannian structure in both the inner gradient descents and the
metric in the objective energy. We also tested discretisations
considered in the latter section and proved the capabilities of
Bézier curves, compared to piecewise geodesic and polygonal ones.

Still, some difficulties arise naturally, and point out some
limitations such as curvature variation: untangling curves requires a
precise control on the curvature, which leads to a trade-off between
smoothness and entanglement. Fortunately, in real data, the curve
structure only bears a rather little curvature, which is encouraging
for precise untangling.

In future works, it would be interesting to consider the extension of
this method to off-the-grid divergence regularisation known to
promote curves \cite{Laville2023}, namely the reconstruction (and
untangling) of curves from one image rather than a stack. Both
problems, dynamic path and static curves, share many similarities and
could benefit from each other improvements.
Also, our proposed algorithm will be tested on real data images,
taken from Ultrasound localisation microscopy (ULM) \cite{Errico2015,
Couture2018} which aims to image this circulation, and relies on
multiple image frames containing microbubbles, flowing through the
blood vessels as an approximation to the real vasculature.
The overall algorithm could be delivered in a so-called
\emph{'off-the-shelf'} package, which may come handy for biologists
and applicative researchers with a GUI.

\section{Acknowledgments}

The authors thank Arthur Chavignon for the ULM dataset.
They are also deeply grateful to Gilles Aubert and Laure Blanc-Féraud
for their time and insightful feedback on the article.
BL would like to warmly thank TB for the shelter he provided in his
flat back in 2023, and for the pungent yet luscious homemade coffee
he got to taste then.

\clearpage
\bibliographystyle{siamplain}
\bibliography{biblio.bib}

\clearpage
\appendix
\section{Some precisions on geodesic and Riemannian setting}
\label{app:riemann-recall}

We begin by recommending that the reader review the relevant concepts
from differential geometry, as introduced in Section \ref{sec:riemann}.

\begin{definition}
  The \emph{metric derivative} \cite{Ambrosio2008} of an absolutely
  continuous curve $\gamma\in \AC([0,1],X)$ reads
  \begin{equation}
    \forall t \in [0,1],\quad |\Dot{\gamma}|(t) = \lim_{\eta \to 0}
    \frac{d(\gamma(t),\gamma(t+\eta))}{|\eta|},
  \end{equation}
  and it is the optimal function satisfying \eqref{eq:AC_def} as in
  definition~\ref{def:AC_curve}.
\end{definition}

We may now introduce the following lemma, useful in the Riemannian
context, see section \ref{sec:riemann} for details, .

\begin{lemma}
  If the distance $d$ is the Euclidean distance, and $\gamma\in
  \Gamma$ is differentiable, then the metric derivative corresponds
  to the norm of the classical derivative : $\forall t \in [0,1],
  \quad |\Dot{\gamma}|(t) = \|\gamma'(t)\|_g.$\\

  Similarly, if $d$ is the geodesic distance associated with the
  metric tensor $g$, then the metric derivative corresponds to the
  metric applied to the classical derivative : $\forall t \in [0,1],
  \quad |\Dot{\gamma}|(t) = \sqrt{g_{\gamma(t)}(\gamma'(t),\gamma'(t))}.$

\end{lemma}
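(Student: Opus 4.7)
My plan is to handle the two statements in parallel, since they are both instances of the same principle: when $\gamma$ is differentiable, the quantity $d(\gamma(t),\gamma(t+\eta))/|\eta|$ agrees, to leading order in $\eta$, with the length of the tangent vector measured in the metric that defines $d$. The statement for $g$ equal to the Euclidean inner product is a special case of the Riemannian one, so the bulk of the work goes into the second claim; the first will follow by specialisation.

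First I would prove the upper bound $\limsup_{\eta\to 0} d_g(\gamma(t),\gamma(t+\eta))/|\eta| \leq \sqrt{g_{\gamma(t)}(\gamma'(t),\gamma'(t))}$. The obvious competitor in the infimum defining $d_g$ (equation \eqref{eq:geodesic-distance}) is the restriction of $\gamma$ itself to $[t,t+\eta]$, which yields
\begin{equation*}
 d_g(\gamma(t),\gamma(t+\eta)) \;\leq\; \int_t^{t+\eta} \sqrt{g_{\gamma(s)}(\gamma'(s),\gamma'(s))}\,\mathrm{d}s.
\end{equation*}
Dividing by $|\eta|$ and invoking Lebesgue's differentiation theorem (together with continuity of $s \mapsto \sqrt{g_{\gamma(s)}(\gamma'(s),\gamma'(s))}$ at $t$, which follows from differentiability of $\gamma$ and smoothness of $g$) gives the desired limsup bound.

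Next I would tackle the matching lower bound, which is the main obstacle, because it has to exploit the infimum structure of the geodesic distance rather than a single test curve. The standard device is to work in a normal coordinate chart $\varphi$ centered at $p = \gamma(t)$: on a neighbourhood of $p$, the metric $g$ satisfies $g_{ij}(x) = \delta_{ij} + O(|\varphi(x)|^2)$ in these coordinates, so for every $\varepsilon>0$ there is a neighbourhood of $p$ in which $d_g \geq (1-\varepsilon)\|\varphi(\cdot) - \varphi(p)\|_{g_p}$. Combined with $\varphi(\gamma(t+\eta)) - \varphi(p) = \eta \,\mathrm{d}\varphi_p(\gamma'(t)) + o(\eta)$ and the fact that, in normal coordinates, $g_p$ acts on $\mathrm{d}\varphi_p(\gamma'(t))$ as the Euclidean inner product on its coordinate vector, this yields
\begin{equation*}
\liminf_{\eta \to 0} \frac{d_g(\gamma(t),\gamma(t+\eta))}{|\eta|} \;\geq\; (1-\varepsilon)\sqrt{g_{\gamma(t)}(\gamma'(t),\gamma'(t))},
\end{equation*}
and sending $\varepsilon\to 0$ closes the two-sided estimate.

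Finally I would note that the first part of the lemma is nothing but the case where $g$ is the Euclidean metric on $\R^d$: the bounds above then reduce, via $\gamma(t+\eta) = \gamma(t) + \eta \gamma'(t) + o(\eta)$, directly to $|\dot\gamma|(t) = \|\gamma'(t)\|_g$, with no chart arguments needed. The only technical subtlety worth flagging is the existence of a normal chart of uniform size along $\gamma$, which is guaranteed whenever $g$ is $\mathscr{C}^1$ and $\gamma$ has image in the interior of $\xx$; since we will apply the lemma on the smooth manifold $\rototrans_2$ equipped with the relaxed Reeds--Shepp metric, this regularity is available.
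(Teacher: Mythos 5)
Your argument is correct in substance, but it takes a genuinely different route from the paper. The paper's proof is a one-line computation through the logarithmic map: it writes $d_g(\gamma(t),\gamma(t+\eta))=\sqrt{g_{\gamma(t)}\left(\Log_{\gamma(t)}(\gamma(t+\eta)),\Log_{\gamma(t)}(\gamma(t+\eta))\right)}$, recognises the quotient by $\eta$ as a difference quotient of $\eta\mapsto\Log_{\gamma(t)}(\gamma(t+\eta))$ (since $\Log_{\gamma(t)}(\gamma(t))=0$), and concludes using the fact that the differential of $\Log_p$ at $p$ is the identity. That route is shorter and reuses machinery already introduced in the paper, but it silently assumes that $\gamma(t+\eta)$ lies in a normal neighbourhood of $\gamma(t)$ and that $\Log_{\gamma(t)}$ is differentiable there. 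Your two-sided estimate — restriction of $\gamma$ as a competitor for the $\limsup$, normal coordinates with $g_{ij}=\delta_{ij}+O(|x|^2)$ for the $\liminf$ — is the classical metric-geometry argument; it is more elementary in that it only uses the infimum-of-lengths definition of $d_g$ and a local Euclidean comparison, and it makes explicit the regularity (injectivity radius, uniform chart) that the paper's proof leaves implicit. Note, however, that the definition of $\dg$ actually printed in equation \eqref{eq:geodesic-distance} is the energy functional (squared speed), so your upper-bound competitor should be phrased against the length functional, as you implicitly do. One small point to tighten: in the upper bound you claim continuity of $s\mapsto\sqrt{g_{\gamma(s)}(\gamma'(s),\gamma'(s))}$ at $t$ ``follows from differentiability of $\gamma$''; pointwise differentiability does not give continuity of $\gamma'$, so either assume $\gamma\in\mathscr{C}^1$ (harmless here), restrict to Lebesgue points of $\|\gamma'\|_g$, or instead obtain the upper bound from the same normal-coordinate comparison $d_g\leq(1+\varepsilon)\,\|\varphi(\cdot)-\varphi(p)\|$ used for the lower bound, which needs only differentiability of $\gamma$ at the single point $t$.
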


\begin{proof}

  The Euclidean case comes directly from the definition of the
  derivative, and the Riemannian case is proven via:

  \begin{align*}
    &\frac{d(\gamma(t), \gamma(t+\eta))}{|\eta|} \\
    &= \frac{\sqrt{g_{\gamma(t)}
    \left(\Log_{\gamma(t)}(\gamma(t+\eta)),\Log_{\gamma(t)}(\gamma(t+\eta))\right)}}{|\eta|}
    \\
    &= \sqrt{g_{\gamma(t)}
      \left(\frac{\Log_{\gamma(t)}(\gamma(t+\eta))-\Log_{\gamma(t)}(\gamma(t))}{\eta},\frac{\Log_{\gamma(t)}(\gamma(t+\eta))-\Log_{\gamma(t)}(\gamma(t))}{\eta}
    \right)}\\
    & \longrightarrow_{\eta \xrightarrow{} 0} \sqrt{g_{\gamma(t)}(\ud
        \Log_{\gamma(t)}\gamma(t)\cdot \gamma'(t),\ud \Log_{\gamma(t)}
    \gamma(t)\cdot \gamma'(t))} = \sqrt{g_{\gamma(t)}(\gamma'(t),\gamma'(t))}
  \end{align*}
\end{proof}

\section{Extremal points of the Benamou-Brenier Energy (Riemannian case)}
\label{app:extreme}

\begin{lemma}
  The functional $w : \gamma \in \Gamma^{\mathcal{M}} \mapsto
  \int_0^1 \alpha + \beta \left\| \gamma'(t) \right\|_g \ud t$, is
  lower semi-continuous.
\end{lemma}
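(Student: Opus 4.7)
The plan is to reduce the lemma to the classical lower semi-continuity of the length functional on a metric space. Observe first that $w(\gamma) = \alpha + \beta L(\gamma)$ with $L(\gamma) \eqdef \int_0^1 \|\gamma'(t)\|_g\,\ud t$, so since the constant $\alpha$ trivially yields a continuous functional, it suffices to show that $L$ is lower semi-continuous on $\Gamma^{\mathcal{M}}$ equipped with the uniform topology (extending $L$ by $+\infty$ on curves that are not absolutely continuous).

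By the lemma of Appendix~\ref{app:riemann-recall} identifying the metric derivative with $\sqrt{g_{\gamma(t)}(\gamma'(t),\gamma'(t))} = \|\gamma'(t)\|_g$, the functional $L$ coincides, for $\gamma \in \AC([0,1],\rototrans_d)$, with the $\dg$-length of $\gamma$, where $\dg$ is the geodesic distance of \eqref{eq:geodesic-distance}. I would then invoke the classical characterization of length in a metric space:
\[
L(\gamma) = \sup_{0 = t_0 < t_1 < \cdots < t_N = 1} \sum_{i=1}^N \dg\bigl(\gamma(t_{i-1}), \gamma(t_i)\bigr),
\]
the supremum being taken over all finite partitions of $[0,1]$ (with a non-rectifiable curve corresponding to an infinite supremum). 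The inequality $\sum_i \dg(\gamma(t_{i-1}),\gamma(t_i)) \leq \int_0^1 \|\gamma'\|_g\,\ud t$ is immediate from the definition of $\dg$, and the reverse inequality follows by refinement of partitions.

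Given a sequence $\gamma_n \to \gamma$ uniformly, each finite partial sum $\gamma \mapsto \sum_{i=1}^N \dg(\gamma(t_{i-1}), \gamma(t_i))$ is continuous, since $\dg$ is a continuous function on $\rototrans_d \times \rototrans_d$ and only finitely many evaluations of $\gamma_n$ are involved. Fixing a partition and exploiting $\sum_i \dg(\gamma_n(t_{i-1}),\gamma_n(t_i)) \leq L(\gamma_n)$, one passes to the limit on the left-hand side and to the $\liminf$ on the right to get $\sum_i \dg(\gamma(t_{i-1}),\gamma(t_i)) \leq \liminf_n L(\gamma_n)$. Taking the supremum over partitions on the left yields $L(\gamma) \leq \liminf_n L(\gamma_n)$, which is the desired inequality.

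The main technical obstacle lies in the composite structure $\gamma = (h,\xi) \in \Gamma$: the path $\xi$ is only required to be continuous on $\{h \neq 0\}$, so the velocity norm $\|\gamma'(t)\|_g$ must be interpreted on this active support. I would handle this by restricting the partition characterization above to sub-intervals lying in $\{h \neq 0\}$, or equivalently by endowing the ambient space $\R \times \rototrans_d$ with a product metric that accounts for the amplitude jumps only there; thanks to the continuity of $h$ and uniform convergence of $h_n$, the same supremum-of-continuous-functionals argument then applies without essential change.
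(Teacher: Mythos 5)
Your proof is correct and follows essentially the same route as the paper's: both rewrite the length part of $w$ as a supremum, over finite partitions of $[0,1]$, of sums of geodesic distances between finitely many point evaluations of $\gamma$; each such partial sum is continuous for uniform convergence, so $w$ is a supremum of continuous functionals and hence lower semi-continuous. If anything, your characterization $\sup_{0=t_0<\cdots<t_N=1}\sum_{i} d_g\bigl(\gamma(t_{i-1}),\gamma(t_i)\bigr)$ is the standard one for the non-squared integrand $\|\gamma'(t)\|_g$ appearing in the lemma, whereas the paper's displayed formula divides each term by $t_{k+1}-t_k$ without squaring the distance (the division would be appropriate, with the distance squared, for the Benamou--Brenier integrand $\|\gamma'(t)\|_g^2$), so your version is the cleaner statement of the same argument.
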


\begin{proof}

  $w$ may be rewritten as the supremum of a family of continuous
  functions of $\gamma$ :

  \[w(\gamma) = \sup_{t_0=0<t_1\dots <t_n=1} \alpha +
    \beta\sum_{k=0}^{n-1} \frac{d_g(\gamma(t_k),
  \gamma(t_{k+1}))}{t_{k+1} - t_k},\]

  thus ensuring lower semi-continuity of $w.$

\end{proof}

\begin{lemma}
Let $w : \Gamma \xrightarrow{} ]0,+\infty]$ a lower semi-continuous
function and $D = \left\{\sigma \in \mathcal{M}(\Gamma)/
\int_{\Gamma} w \ud \sigma \leq 1\right\}.$
D is closed and:

\begin{align*}
\mathrm{Ext}(D) = \{0\}\cup \left\{ w(\gamma)^{-1}\delta_{\gamma} \,
| \, w(\gamma) < +\infty \right\}.
\end{align*}

\end{lemma}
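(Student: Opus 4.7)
The plan is to first establish closedness of $D$, and then prove both inclusions characterising $\mathrm{Ext}(D)$. For closedness, I would show that $\sigma \mapsto \int_\Gamma w \, \ud\sigma$ is lower semi-continuous on the positive cone of $\mathcal{M}(\Gamma)$ with respect to the \wast\ topology. Since $\Gamma$ is a Polish space and $w: \Gamma \to ]0, +\infty]$ is LSC, $w$ can be written as the monotone pointwise supremum of a sequence of continuous bounded functions $w_m \nearrow w$. Each $\sigma \mapsto \int w_m \, \ud\sigma$ is \wast-continuous, so their supremum $\sigma \mapsto \int w \, \ud\sigma$ is \wast-LSC, and hence the sublevel set $D$ is closed.

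For the inclusion $\{0\} \cup \{w(\gamma)^{-1}\delta_\gamma \mid w(\gamma) < +\infty\} \subseteq \mathrm{Ext}(D)$, I would argue directly. The zero measure is extremal because $0 = \tfrac{1}{2}(\sigma_1 + \sigma_2)$ with $\sigma_1, \sigma_2 \geq 0$ forces both to vanish. For $w(\gamma)^{-1}\delta_\gamma$, any decomposition inside $D$ must have each component supported on $\{\gamma\}$, hence of the form $c_i \delta_\gamma$; the constraints $c_i w(\gamma) \leq 1$ and $c_1 + c_2 = 2 w(\gamma)^{-1}$ then force $c_1 = c_2 = w(\gamma)^{-1}$.

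The converse inclusion is the substantial part. Let $\sigma \in \mathrm{Ext}(D)$ with $\sigma \neq 0$. I would first argue $\int w \, \ud\sigma = 1$: otherwise, if $c := \int w \, \ud\sigma < 1$, then for any $\epsilon \in (0, (1-c)/c]$ the identity $\sigma = \tfrac{1}{2}((1-\epsilon)\sigma + (1+\epsilon)\sigma)$ yields a non-trivial convex combination in $D$, contradicting extremality. Next I would show $\mathrm{supp}(\sigma)$ is a singleton. Since $\int w \, \ud\sigma = 1 < \infty$ forces $\sigma(\{w = +\infty\}) = 0$, one may work inside $\{w < +\infty\}$. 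If $\mathrm{supp}(\sigma)$ contained two distinct paths $\gamma_1 \neq \gamma_2$, I would pick disjoint open neighbourhoods $U_i \ni \gamma_i$ and set $A = U_1$, $a = \int_A w \, \ud\sigma$, $b = \int_{A^c} w \, \ud\sigma$. The strict positivity of $w$ combined with $\sigma(U_1), \sigma(U_2) > 0$ would give $a, b > 0$ and $a + b = 1$, so that $\sigma = a \, (\sigma|_A/a) + b \, (\sigma|_{A^c}/b)$ would be a convex combination of two distinct elements of $D$, again contradicting extremality. Hence $\sigma = c \delta_\gamma$, and the normalisation $c w(\gamma) = 1$ yields the claimed form.

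The main obstacle is this second step of the converse: ensuring that the split $\sigma|_A / a$ and $\sigma|_{A^c}/b$ really yields two distinct admissible measures. This rests on the strict positivity of $w$ together with $\sigma(\{w = +\infty\}) = 0$, both automatic from the hypotheses and the normalisation $\int w \, \ud\sigma = 1$.
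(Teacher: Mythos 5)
Your proposal is correct and follows essentially the same route as the paper's proof: the forward inclusion is obtained by showing that any component of a convex decomposition of $w(\gamma)^{-1}\delta_{\gamma}$ must itself be a nonnegative multiple of $\delta_{\gamma}$, and the converse by splitting a measure with two distinct curves in its support across disjoint neighbourhoods and renormalising the two restrictions into a nontrivial convex combination in $D$. You in fact supply several details that the paper's proof leaves implicit or omits entirely, namely the closedness of $D$ via monotone approximation of the lower semi-continuous $w$ by continuous functions, the extremality of $0$, and the preliminary normalisation $\int_{\Gamma} w \, \mathrm{d}\sigma = 1$, which is needed to rule out the non-extremal multiples $c\,\delta_{\gamma}$ with $0 < c < w(\gamma)^{-1}$.
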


\begin{proof}
For any convex decomposition $\sigma = \lambda \sigma_1 + (1-\lambda)
\sigma_2,$ if $\sigma = w(\gamma)^{-1}\delta_{\gamma}$ for any
$\gamma \in \Gamma,$ then $\sigma_1$ and $\sigma_2$ are supported
only on $\gamma$, \ie\ there exists $\alpha, \beta \in \R_+$ such
that $\quad \sigma_1 = \frac{\alpha}{w(\gamma)}\delta_{\gamma}, \quad
\sigma_2 = \frac{\beta}{w(\gamma)}\delta_{\gamma}$.\\
And then, as $\int w \ud \sigma = 1 = \lambda \alpha + (1-\lambda)
\beta$ and $\alpha + \beta = 1,$ we have $\alpha=\beta=1$ and
$\sigma=\sigma_1=\sigma_2$. Hence, $w(\gamma)^{-1}\delta_{\gamma}$ is
indeed an extremal point of $D.$\\

Let $\sigma \in \mathrm{Ext}(D),$ and make the assumption that there
are at least 2 curves $\gamma_0$ and $\gamma_1$ in the support of
$\sigma.$ Let $\Gamma_0 = \{d_{\Gamma}(\gamma,\gamma_0)\leq
\frac{1}{2}d_{\Gamma}(\gamma_1,\gamma_0)\},$ and $\Gamma_1 =
\Gamma_0^c,$ defining $\sigma_0 = \mathbbm{1}_{\Gamma_0} \sigma$ and
$\sigma_1 = \sigma - \sigma_0.$\\
If $\int w\ud \sigma_i\neq 0$ and $\int w \ud \sigma_i \leq \int w
\ud \sigma \leq 1,$ then with $\lambda = \int w \ud \sigma_0,$ we get
that $\sigma = \lambda \frac{\sigma_0}{\lambda} + (1-\lambda)
\frac{\sigma_1}{1-\lambda}$ and we have written $ \sigma$ as a convex
combination of elements of $D,$ so either $\sigma \notin
\mathrm{Ext}(D)$ or for a certain $i\in\{0,1\}, \int w\ud \sigma_i
=0.$ In the last case we have that by positivity of $w,$ $\sigma=0$
and then we can't have $\gamma_i \in \Gamma_i,$ which contradicts the
assumption.
Thus, we have that $\sigma$ has at most $1$ curve in its support
which means either $\sigma= 0$ or $\sigma = \frac{1}{w(\gamma)}
\delta_{\gamma}$ for some $\gamma \in \Gamma.$
\end{proof}

\section{Reeds-Shepp metric tensor and its affine connection}
\label{app:reeds-shepp}

Let $p = (x,y,\theta) \in \rototrans_2$ a rototranslation, the metric
tensor $g$ at $p$ is defined by:

\begin{align*}
g_{ij} =
\begin{bmatrix}
\cos(\theta)^2 + \varepsilon^{-2}\sin(\theta)^2 & \left( 1 -
\varepsilon^{-2}\right) \cos(\theta)\sin(\theta) & 0 \\
\left( 1 - \varepsilon^{-2}\right) \cos(\theta)\sin(\theta) &
\sin(\theta)^2 + \varepsilon^{-2}\cos(\theta)^2 & 0 \\
0 & 0 & \xi^2
\end{bmatrix},
\end{align*}

with constants $\varepsilon, \xi > 0$ while its contravariant inverse reads:

\begin{align*}
g^{ij} =
\begin{bmatrix}
\cos(\theta)^2 + \varepsilon^{2}\sin(\theta)^2 & \left( 1 -
\varepsilon^{2}\right) \cos(\theta)\sin(\theta) & 0 \\
\left( 1 - \varepsilon^{2}\right) \cos(\theta)\sin(\theta) &
\sin(\theta)^2 + \varepsilon^{2}\cos(\theta)^2 & 0 \\
0 & 0 & \xi^{-2}
\end{bmatrix}.
\end{align*}

The Christoffel symbols, up to the symmetries\footnote{The
Levi-Civita connection $\connex$ is \emph{by definition}
torsion-free.} $\Gamma^k_{ij} = \Gamma^k_{ji}$ and the null symbols,
of the Levi-Civita connection $\connex$ associated to $g$ write down:
\begin{align*}
\Gamma^x_{x,\theta} &= -\frac{1}{2\varepsilon^2}(\varepsilon^4 -
1)\cos(\theta)\sin(\theta) \\
\Gamma^x_{y,\theta} &= -\frac{1}{2\varepsilon^2}(\varepsilon^4 -
(\varepsilon^4 - 1)\cos(\theta)^2 - \varepsilon^2)\\
\Gamma^y_{x,\theta} &= \frac{1}{2\varepsilon^2}((\varepsilon^4 -
1)\cos(\theta)^2 - \varepsilon^2 + 1) \\
\Gamma^y_{y,\theta} &= \frac{\varepsilon^4 -
1}{2\varepsilon^2}\cos(\theta)\sin(\theta) \\
\Gamma^\theta_{x,x} &= \frac{\varepsilon^2 - 1}{\varepsilon^2\xi^2}
\cos(\theta)\sin(\theta)\\
\Gamma^\theta_{x,y} &= - \frac{1}{2\varepsilon^2\xi^2}
(2(\varepsilon^2 - 1)\cos(\theta)^2 - \varepsilon^2 + 1) \\
\Gamma^\theta_{y,y} &= - \frac{\varepsilon^2 -
1}{\varepsilon^2\xi^2}\cos(\theta)\sin(\theta).
\end{align*}

\section{Riemannian optimization on manifold and tangent bundle}
\label{app:riemannian-optim}

\subsection{Improved gradient descent by the Riemannian structure}

For first-order optimization, there is a simple adaptation of
gradient descent for Riemannian manifold $(\M, g)$.
The Riemannian gradient descent simply consists in a gradient
descent, but we take into account the non-Euclidean nature of the
manifold by leveraging the Riemannian exponential map to ramp the
update on the manifold.
The procedure is described in Algorithm \ref{algo:riemann}.

\begin{algorithm}[ht!]
\SetAlgoLined
\KwData{Objective function $f:x\in\M \mapsto f(x)\in \R$,
initialisation point $x_0 \in \M$, number of timesteps  $T \in \N^*$}

\For{$t = 0, 1, 2, \ldots, T-1$}{
Compute the Riemannian gradient $\nabla_{\M} f(x)$.

Update the iterate by:
\begin{align*}
  x = \Exp_{x}(-\eta \nabla_{\M} f(x))
\end{align*}
where $\text{Exp}_x$ denotes the Riemannian exponential map at point $x$.
}
\KwResult{Point $x$ of the manifold}

\label{algo:riemann}
\caption{Riemannian gradient descent}
\end{algorithm}

Usually, we replace the use of the exponential map by a
\emph{retraction}, i.e. a first-order approximation of this mapping.

\subsection{Optimisation on the tangent bundle}

Now, we want to carry out the Riemannian gradient descent not only on
the base manifold itself but also on the tangent bundle, which means
that the objective function is now the mapping:

\begin{align*}
f:(x,v) \in \M\times \Tang_x\M \mapsto f(x,v) \in \R.
\end{align*}

The tangent bundle $\TM$ is itself a $2d$--dimensional manifold with
tangent bundle $\TTM.$ For any base point $(p,u)\in \TM,$
$\Tang_{(p,u)}\TM$ has a particular structure \cite{Sasaki1958} as it
can be decomposed into horizontal and vertical subspaces, both $d-$dimensional:

\begin{align*}
\Tang_{(p,u)}\TM = \mathcal{H}_{(p,u)}\oplus\mathcal{V}_{(p,u)},
\end{align*}

There exists a natural lift from $\Tang_p\M$ to the horizontal and
vertical subspaces, let $X\in \Tang_p\M$ we denote $X^h$ the
horizontal part and $X^v$ the vertical part of $X$.

A natural choice of metric for the tangent bundle manifold is called
the Sasaki metric \cite{Sasaki1958}. It simply consists in using the
base metric and imposing perpendicularity of the horizontal and
vertical subspaces, \ie\ ensuring that the horizontal and vertical
subspaces remain orthogonal.

\begin{definition}
The \emph{Sasaki metric} $\sasaki$ is a metric on $\Tang_{(p,u)}\TM$
derived from the base metric $g$ by the following properties.

\begin{enumerate}
\item Horizontal reservation of the metric:
  \begin{math}
    \sasaki_{(p,u)}(X^h,Y^h) = g_p(X,Y),
  \end{math}
\item Vertical preservation of the metric:
  \begin{math}
    \sasaki_{(p,u)}(X^v,Y^v) = g_p(X,Y),
  \end{math}
\item Orthogonality:
  \begin{math}
    \sasaki_{(p,u)}(X^h,Y^v) = 0,
  \end{math}
\end{enumerate}
\end{definition}

\subsection{Energy in the piecewise geodesic framework on the Sasaki metric}

For ease of implementation and instead of the straightforward
piecewise geodesic model, we consider a set of points on the Sasaki
manifold associated with the base Reeds-Shepp metric
$(x_k,v_k)_{1\leq k\leq n-1}$. The piecewise geodesic curve
interpolating the points corresponds to the condition, $\forall 1\leq
k\leq n-1$: $\Exp_{x_k}(v_k) = x_{k+1}$.

We bring slight modifications to the energy by simply adding a
penalisation, to avoid the difficult and expensive computation of the
Riemannian Log map.

Finally, the energy we minimise in the oracle step writes down, for
$\lambda >0$:

\begin{align*}
F((x_k,v_k)) &= -\crogamma{\gamma_{(x_k,v_k)}}{\eta^{[k]}} + \beta
\sum_k (t_{k+1}-t_k) \, \norm{v_k}_{x_k}^2\\
& \quad + \lambda \sum_k \norm{\Exp_{x_k}(v_k) - x_{k+1}}_2^2.
\end{align*}

\end{document}